\newtheorem{theorem}{Theorem}[section]
\newtheorem{lemma}[theorem]{Lemma}
\newtheorem{proposition}[theorem]{Proposition}
\newtheorem{conjecture}[theorem]{Conjecture}
\newtheorem{question}[theorem]{Question}
\theoremstyle{definition}
\newtheorem{definition}[theorem]{Definition}
\theoremstyle{remark}
\newtheorem{remark}[theorem]{Remark}
\newtheorem{example}[theorem]{Example}
\newtheorem{case2}{Case}
\numberwithin{subcase}{case}
\numberwithin{subsupposition}{supposition}
\numberwithin{subsupposition2}{supposition2}
\def\@seccntformat#1{%
  \protect\textup{\protect\@secnumfont
    \ifnum\pdfstrcmp{subsection}{#1}=0 \bfseries\fi
    \csname the#1\endcsname
    \protect\@secnumpunct
  }%
}  
\def\subsection{\@startsection{subsection}{3}%
  \z@{.7\linespacing\@plus.7\linespacing}{0.5\linespacing}
  {\normalfont\bfseries}}
\def\BState{\State\hskip-\ALG@thistlm}
\DeclareMathOperator{\gpr}{gpr}
\DeclareMathOperator{\cpr}{corepr}
\DeclareMathOperator{\mi}{mi}
\DeclareMathOperator{\pn}{Pn}
\subjclass[2000]{68R15, 68Q45, 05A05}
\keywords{Parikh normal form, general print, square-free, $M$\!-equivalence}
\begin{document}

\title{Parikh Motivated Study on Repetitions in Words}

\author{Ghajendran Poovanandran}
\address{School of Mathematical Sciences\\
Universiti Sains Malaysia\\
11800 USM, Malaysia}
\email{p.ghajendran@gmail.com}

\author{Adrian Atanasiu}
\address{Consulting Prof. at Faculty of Mathematics and Computer Science\\
Bucharest University\\
Str. Academiei 14\\
Bucharest 010014, Romania}
\email{aadrian@gmail.com}

\author{Wen Chean Teh}
\address{School of Mathematical Sciences\\
Universiti Sains Malaysia\\
11800 USM, Malaysia}
\email[Corresponding author]{dasmenteh@usm.my}

\begin{abstract}
We introduce the notion of general prints of a word, which is substantialized by certain canonical decompositions, to study repetition in words. These associated decompositions, when applied recursively on a word, result in what we term as core prints of the word. The length of the path to attain a core print of a general word is scrutinized. This \mbox{paper} also studies the class of square-free ternary words with respect to the Parikh \mbox{matrix} \mbox{mapping}, which is an extension of the classical Parikh mapping. It is shown that there are only finitely many matrix-equivalence classes of ternary words such that all words in each class are square-free. Finally, we \mbox{employ} \mbox{square-free} morphisms to generate infinitely many pairs of \mbox{square-free} ternary words that share the same Parikh matrix.
\end{abstract}

\maketitle

\section{Introduction}
The word $banana$ can be written as $ba(na)^2$.
Repetition in words is among the prominent themes that are studied in \mbox{combinatorics} on words and formal language theory. The systematic study of repetition in words dates back to the works of Axel Thue at the dawn of the $20^{\text{th}}$ century, where he first showed the existence of an infinitely long binary (respectively ternary) word that is cube-free (respectively square-free). In this paper, we explore new domains in studying repetition in words.

We newly introduce and study the notion of \textit{general prints} of word. A \mbox{general} print of a word is obtained by rewriting, in a certain prescribed fashion, repeated factors in a word into a single occurrence of that factor. Out of all the possible ways to reduce a given word in such manner, we choose two natural ones---that is to commence from the right or the left of that word. This paper also studies square-free (i.e.~repetition-free) words with respect to the notion of \textit{Parikh matrix mapping}. The latter, which was introduced in \cite{MSSY01}, is a generalization of the classical Parikh mapping \cite{rP66}. The Parikh matrix mapping is well-studied in the literature (for example, see \cite{GT17c,AAP08,wT14,MSY04,aS06,MBS17,GT17b}), particularly as a tool to deal with subword occurrences in words.

The remainder of this paper is structured as follows. Section 2 provides the basic terminology and preliminaries.
Section 3 introduces and scrutinize the left and right general prints of a word. Furthermore, the notion of core prints of a word is proposed and studied---a core print of a word is obtained by recursively reducing a word to its corresponding general print until the resulting word is no longer reducible. Section 4 presents new results on square-free words pertaining to the Parikh matrix mapping, exclusively for the ternary alphabet. It is shown that there are infinitely many pairs of square-free ternary words which share the same Parikh matrix. Our conclusions follow after that.

\section{Preliminaries}
The set of all positive integers is denoted by $\mathbb{Z}^+$ and let $\mathbb{N}=\mathbb{Z}^+\cup\{0\}$. The cardinality of a set $A$ is denoted by $|A|$.

Suppose $\Sigma$ is a finite and nonempty alphabet. The set of all words over $\Sigma$ is denoted by $\Sigma^*$ and $\lambda$ is the unique empty word. Let $\Sigma^+=\Sigma^*\backslash\{\lambda\}$. If $v,w\in\Sigma^*$, the
concatenation of $v$ and $w$ is denoted by $vw$. An ordered alphabet is an alphabet
$\Sigma=\{a_1,a_2,\ldots ,a_s\}$ with an ordering on it. For example, if $a_1<a_2<\cdots <a_s$, then we may write $\Sigma=\{a_1<a_2<\cdots <a_s\}$. For convenience, we shall frequently abuse notation and use $\Sigma$ to denote both the ordered alphabet and its underlying alphabet.

A word $v$ is a \emph{scattered subword} (or simply \textit{subword}) of $w\in \Sigma^*$ if and only if there exist $x_1,x_2,\dotsc, x_n$, $y_0, y_1, \dotsc,y_n\in \Sigma^*$ (possibly empty) such that \mbox{$v=x_1x_2\dotsm x_n \text{ and } w=y_0x_1y_1\dotsm y_{n-1}x_ny_n$}. If the letters in $v$ occur contiguously in $w$ (i.e.~$y_1=y_2=\dotsc=y_{n-1}=\lambda$), then $v$ is a \emph{factor} of $w$. A word $w\in\Sigma^*$
is \textit{square-free} iff it does not contain any factor of the form $u^2$ for some $u\in\Sigma^+$.

The number of occurrences of a word $v$ as a subword of $w$ is denoted by $\vert w\vert_v$. 
Two occurrences of $v$ are considered different if and only if they differ by at least one position of some letter. 
For example, $\vert abab\vert_{ab}=3$ and $\vert abcabc\vert_{abc}=4$.
By convention, $\vert w\vert_{\lambda}=1$ for all $w\in \Sigma^*$.

For any integer $n\geq 2$, let $\mathcal{M}_n$ denote the multiplicative monoid of $n\times n$ upper triangular matrices with nonnegative integral entries and unit diagonal.

\begin{definition} \cite{MSSY01}
Suppose $\Sigma=\{a_1<a_2<\cdots <a_k\}$ is an ordered alphabet. The \textit{Parikh matrix mapping} with respect to $\Sigma$, denoted by $\Psi_\Sigma$, is the morphism:
\begin{equation*}
\Psi_\Sigma:\Sigma^*\rightarrow\mathcal{M}_{k+1},
\end{equation*}
defined such that for every integer $1\le q\le k$, if $\Psi_\Sigma(a_q)=(m_{i,j})_{1\le i,j\le k+1}$, then
\begin{itemize}
\item $m_{i,i}=1$ for all $1\le i\le k+1$;
\item $m_{q,q+1}=1$; and 
\item all other entries of the matrix $\Psi_\Sigma(a_q)$ are zero.
\end{itemize} 
Matrices of the form $\Psi_\Sigma(w)$ for $w\in\Sigma^*$ are termed as  \textit{Parikh matrices}.
\end{definition}

\begin{theorem}\cite{MSSY01}\label{PropertiesParikhMat}
Suppose $\Sigma=\{a_1<a_2<\cdots <a_k\}$ is an ordered alphabet and $w\in\Sigma^*$. The matrix $\Psi_\Sigma(w)=(m_{i,j})_{1\le i,j\le k+1}$ has the following properties:
\begin{itemize}
\item $m_{i,j}=0$ for all $1\le j<i\le k+1$;
\item $m_{i,i}=1$ for all $1\le i\le k+1$;
\item $m_{i,j+1}=|w|_{a_ia_{i+1}\cdots a_j}$ for $1\le i\le j\le k$.
\end{itemize}
\end{theorem}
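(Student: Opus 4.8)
The plan is to prove all three items simultaneously by induction on $|w|$. For the base case $w=\lambda$, the morphism gives $\Psi_\Sigma(\lambda)=I_{k+1}$, the $(k+1)\times(k+1)$ identity matrix, so $m_{i,j}=0$ for $j<i$ and $m_{i,i}=1$ are immediate; moreover $m_{i,j+1}=0$ for $i\le j$, which agrees with $|\lambda|_{a_i\cdots a_j}=0$ since $a_i\cdots a_j$ is a nonempty word whenever $i\le j$.

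For the inductive step, I would suppose the claim holds for all words of length $n$ and write $w=ua_q$ with $|u|=n$ and $1\le q\le k$. Since $\Psi_\Sigma$ is a morphism, $\Psi_\Sigma(w)=\Psi_\Sigma(u)\,\Psi_\Sigma(a_q)$, and by definition $\Psi_\Sigma(a_q)=I_{k+1}+E$, where $E$ is the matrix having $1$ in position $(q,q+1)$ and $0$ elsewhere. Writing $\Psi_\Sigma(u)=(m'_{i,j})$, expanding the product gives $m_{i,j}=m'_{i,j}$ whenever $j\ne q+1$, while $m_{i,q+1}=m'_{i,q+1}+m'_{i,q}$. The first two asserted properties then follow quickly from the induction hypothesis: if $j<i$ and $j=q+1$, then also $q<i$, so $m'_{i,q+1}=m'_{i,q}=0$ and hence $m_{i,q+1}=0$; and for $i=q+1$ one has $m'_{q+1,q+1}=1$ and $m'_{q+1,q}=0$, giving $m_{q+1,q+1}=1$.

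The remaining task is to verify $m_{i,j+1}=|w|_{a_i\cdots a_j}$ for $i\le j$, and the key combinatorial fact is that for any nonempty word $v$ with last letter $b$ and any $u\in\Sigma^*$, one has $|ub|_v=|u|_v+|u|_{v'}$ if $b$ equals the last letter of $v$ and $|ub|_v=|u|_v$ otherwise, where $v'$ denotes $v$ with its last letter deleted (so $v'=\lambda$ when $v$ is a single letter, and $|u|_\lambda=1$ by convention). If $j\ne q$, then $a_j\ne a_q$, so $m_{i,j+1}=m'_{i,j+1}=|u|_{a_i\cdots a_j}=|w|_{a_i\cdots a_j}$. If $j=q$, then $m_{i,q+1}=m'_{i,q+1}+m'_{i,q}$; when $i<q$ the induction hypothesis identifies this with $|u|_{a_i\cdots a_q}+|u|_{a_i\cdots a_{q-1}}=|w|_{a_i\cdots a_q}$, and when $i=q$ it equals $|u|_{a_q}+1=|u|_{a_q}+|u|_\lambda=|w|_{a_q}$. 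This exhausts all cases and completes the induction.

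This is a routine induction, and I do not expect a genuine obstacle; the only point requiring care is the boundary case $i=q$ (equivalently $j+1=q+1$ with $i=j$) in the last step, where the deleted prefix of $a_i\cdots a_j$ is empty and the convention $|u|_\lambda=1$ must be invoked so that the superdiagonal entry increments correctly upon appending the letter $a_q$.
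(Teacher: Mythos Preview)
Your induction on $|w|$ is correct and is the standard argument; the paper does not supply its own proof of this theorem but merely cites it from \cite{MSSY01}, so there is nothing to compare against. The only delicate spot, which you handle correctly, is the case $i=j=q$ where one must use $m'_{q,q}=1$ (equivalently the convention $|u|_\lambda=1$) to get the superdiagonal entry to increment.
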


\begin{remark}\label{RemEntriesParMat}
Suppose $\Sigma=\{a<b<c\}$ and $w\in\Sigma^*$. Then
\begin{equation*}
\Psi_\Sigma(w)=\begin{pmatrix}
1 & |w|_a & |w|_{ab} & |w|_{abc} \\
0 & 1 & |w|_b & |w|_{bc}\\
0 & 0 & 1 & |w|_c\\
0 & 0 & 0 & 1
\end{pmatrix}.
\end{equation*}
\end{remark}

\begin{example}
Suppose $\Sigma=\{a<b<c\}$ and $w=abac$.
Then \begin{align*}
\Psi_{\Sigma}(w)&=\Psi_{\Sigma}(a)\Psi_{\Sigma}(b)\Psi_{\Sigma}(a)\Psi_{\Sigma}(c)\\
&=\begin{pmatrix}
1 & 1 & 0 &0 \\
0 & 1 & 0 & 0\\
0 & 0 & 1 & 0\\
0 & 0 & 0 & 1
\end{pmatrix}
\begin{pmatrix}
1 & 0 & 0 &0 \\
0 & 1 & 1 & 0\\
0 & 0 & 1 & 0\\
0 & 0 & 0 & 1
\end{pmatrix}
\begin{pmatrix}
1 & 1 & 0 &0 \\
0 & 1 & 0 & 0\\
0 & 0 & 1 & 0\\
0 & 0 & 0 & 1
\end{pmatrix}
\begin{pmatrix}
1 & 0 & 0 &0 \\
0 & 1 & 0 & 0\\
0 & 0 & 1 & 1\\
0 & 0 & 0 & 1
\end{pmatrix}
= \begin{pmatrix}
1 & 2 & 1 & 1 \\
0 & 1 & 1 & 1\\
0 & 0 & 1 & 1\\
0 & 0 & 0 & 1
\end{pmatrix}.
\end{align*}
\end{example}

\begin{definition}
Suppose $\Sigma$ is an ordered alphabet.
Two words $w,w'\in \Sigma^*$ are \emph{$M$\!-equivalent}, denoted by $w\equiv_Mw'$, iff $\Psi_{\Sigma}(w)=\Psi_{\Sigma}(w')$.
A word $w\in \Sigma^*$ is \emph{$M$\!-ambiguous} iff it is $M$\!-equivalent to another distinct word. Otherwise, $w$ is \emph{$M$\!-unambiguous}. For any word $w\in\Sigma^*$, we denote by $C_w$ the set of all words that are $M$\!-equivalent to $w$.
\end{definition}

The following rewriting rules, introduced in \cite{AAP08}, are elementary in deciding whether two words are $M$\!-equivalent. (The version provided here is stated exclusively for the ternary alphabet.) Suppose $\Sigma=\{a<b<c\}$ and $w,w'\in\Sigma^*$.
\vspace{0.2em}\begin{itemize}[leftmargin=1.8cm]
\item[Rule $E1$.] If $w=xacy$ and $w'=xcay$ for some $x,y\in\Sigma^*$, then $w\equiv_Mw'$.
\item[Rule $E2$.] If $w=x\alpha byb\alpha z$ and $w'=xb\alpha y\alpha bz$ for some $\alpha\in\{a,c\}$, $x,z\in\Sigma^*$ and $y\in\{\alpha,b\}^*$, then $w\equiv_Mw'$.
\end{itemize}\vspace{0.2em}

\begin{definition}\label{DefMEequivalence}
Suppose $\Sigma$ is an ordered alphabet and $w,w'\in\Sigma^*$.
\begin{enumerate}
\item We say that $w$ is {\it $1$-equivalent} to $w'$ if and only if $w'$ can be obtained from $w$ by finitely many applications of Rule $E1$.
\item We say that $w$ is {\it ME-equivalent}\footnote{The term elementary matrix equivalence (\textit{ME}-equivalence) was first introduced in \cite{aS10}.} to $w'$ if and only if $w'$ can be obtained from $w$ by finitely many applications of Rule $E1$ and $E2$.
\end{enumerate}
\end{definition}

\begin{example}
Suppose $\Sigma=\{a<b<c\}$. Consider
\begin{equation*}
w=ab\boldsymbol{ca}bcbbc\rightarrow aba\boldsymbol{cb}cb\boldsymbol{bc}\rightarrow ababccbcb=w'.
\end{equation*}
Thus $w$ is \textit{ME}-equivalent to $w'$.
\end{example}

The following notion, introduced by \c{S}erb\v{a}nu\c{t}\v{a} in \cite{SS06}, is closely related to the central object of study in the next section.
\begin{definition}\label{DefSerbPrint}
Suppose $\Sigma$ is an alphabet and $w\in \Sigma^*$. Suppose $w=a^{p_1}_1a^{p_2}_2\cdots a^{p_n}_n$ such that $a_i\in\Sigma$ and $p_i>0$ for all $1\le i\le n$ with $a_i\neq a_{i+1}$ for all $1\le i\le n-1$. The \textit{print} of $w$ is the word $a_1a_2\cdots a_n$.
\end{definition}

\section{General Prints of a Word}\label{SecGeneralPrint}

In this section, we introduce and study the notion of general prints of a word. We first present a canonical decomposition of words which will serve as a basis for our study. Note that this decomposition was first introduced in \cite{GT17d} as a means to obtain the \textit{(right) Parikh normal form} of a word. Hence we retain the notation used to denote the final product of this decomposition.

In this section, let $\Sigma$ be a fixed alphabet with size at least two.

\subsection{Parikh Normal Form}
\begin{definition}[Parikh Normal Form]\cite{GT17d}\label{RightLeftDecomp}
Suppose $w\in\Sigma^+$. 

\begin{itemize}[leftmargin=1em]
\item Define $R_w=\{\,(u,v,n)\in\Sigma^*\times\Sigma^+\times \mathbb{Z}^+\,\,|\,\,w=uv^n\,\}$.

\item Define $\tau_r(w)=\max\{\,n\in\mathbb{Z}^+\,\,|\,(u,v,n)\in R_w \text{ for some }u\in\Sigma^* \text{ and } v\in\Sigma^+\,\}$.

\item Define $\theta_r(w)$ as follows:
\begin{itemize}
\item if $\tau_r(w)=1$, then $\theta_r(w)$ is defined to be the minimum element of the following set:
$$\{\,|v|\,\,|\,\,v\in\Sigma^+ \text{ and }
w=uv \text{ for some } u\in\Sigma^+\text{ with }\tau_r(u)\neq 1\},$$
provided it is nonempty; otherwise $\theta_r(w)=|w|$.
\item if $\tau_r(w)>1$, then $\theta_r(w)$ is defined to be the maximum element of the following set:
$$\{\,|v|\,\,|\,\,v\in\Sigma^+ \text{ and }
w=uv^{\tau_r(w)}\text{ for some } u\in\Sigma^+\}.$$
\end{itemize}

\item Define $\rho_r(w)=(u',v',\tau_r(w))$ to be the unique triplet in $R_w$ such that $|v'|=\theta_r(w)$.

\item Let $w_0=w$ and $(w_1,v_0,n_0)=\rho_r(w_0)$. For all integers $i\ge 1$ and while $w_i\neq \lambda$, recursively define $(w_{i+1},v_i,n_i)=\rho_r(w_i)$. Let $k\ge 0$ be the largest integer such that $w_k\neq \lambda$. 
\end{itemize}
We denote the form $v_{k}^{n_{k}}v_{k-1}^{n_{k-1}}\cdots v_0^{n_0}$ of $w$ by $\pn_r(w)$.






\end{definition}

\begin{remark}
The requirement $v\in\Sigma^+$ in the first item of Definition~\ref{RightLeftDecomp} eliminates
the trivial decomposition of a word $w$ into $w=w\lambda^n$ at each stage as $n$ does
not have an upper bound in this case.
\end{remark}


The following example illustrates the mechanism of the right decomposition of a word.

\begin{example}\label{ExRightDecomp}
Suppose $\Sigma=\{a,b,c\}$. Consider the word $w=abcccbcabbabb$.

Starting from right to left, we first aim to decompose the word $w$ such that the power of the right most component is the highest. In this case, the highest such power is two, where $abcccbcabbabb$ can be decomposed to either $abcccbcabbab^2$ or $abcccbc(abb)^2$. Since $|abb|>|b|$, our choice of decomposition would be $abcccbc(abb)^2$.

Next, we look at the remaining part of $w$, which is yet to be decomposed, that is $abcccbc$. Here, the highest power attainable on the right most component is one. Therefore, we decompose $abcccbc$ in a way that the right most component has the shortest length such that the remaining part can be decomposed to a power higher than one. That is to say, we decompose $abcccbc$ into $abccc(bc)^1$.

The part that remains to be decomposed now is $abccc$. Continuing the process as in the first step, we decompose $abccc$ into $abc^3$.

The final remaining part of $w$ is $ab$. The highest power attainable on the right most component here is one. Furthermore, there is no way for us to decompose $ab$ such that there exists some remaining factor which can be decomposed to a power higher than one. Thus the final component is $(ab)^1$.

Therefore, we have $\pn_r(w)=(ab)^1c^3(bc)^1(abb)^2$. Omitting the parentheses and power when the latter is one, we write $\pn_r(w)=abc^3bc(abb)^2$.
\end{example}

We define the left decomposition of a word $w$ analogously to Definition~\ref{RightLeftDecomp} such that the decomposition commences from left to right. Furthermore, in a similar fashion, we denote the final product of the left decomposition of $w$ by $\pn_l(w)$.  

\begin{example}
Suppose $\Sigma=\{a,b,c\}$. Consider the word $w=abababacbcbc$.
Then $\pn_l(w)=(ab)^3a(cb)^2c$.
\end{example}

The following result holds directly by the definitions of the right and left decompositions of a word.

\begin{proposition}\label{MirrorImageProperty}
Suppose $w\in\Sigma^+$. Let $$\pn_r(w)=v_{k}^{n_{k}}\cdots v_1^{n_1}v_0^{n_0} \text{ and } \pn_l(w)={u}_0^{m_0}{u}_{1}^{m_{1}}\cdots {u}_j^{m_j}$$ 
for some integers $j,k\in\mathbb{N}$, $m_i\in\mathbb{Z}^+\,(0\le i\le j)$ and $n_i\in\mathbb{Z}^+\,(0\le i\le k)$, and words $u_i\in\Sigma^+\,(0\le i\le j)$ and $v_i\in\Sigma^+\,(0\le i\le k)$. 
Then 
\begin{itemize}
\item $\pn_r(\mi(w))=[\mi({u}_j)]^{m_j}\cdots [\mi({u}_{1})]^{m_1}[\mi({u}_{0})]^{m_0}$;
\item $\pn_l(\mi(w))=[\mi({v}_0)]^{n_0}[\mi({v}_1)]^{n_1}\cdots [\mi({v}_{k})]^{n_k}$.
\end{itemize}
\end{proposition}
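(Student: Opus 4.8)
The statement asserts that the right and left Parikh normal form decompositions are ``mirror images'' of one another under the reversal map $\mi$. The plan is to prove this by establishing the analogous identity at the level of a \emph{single} decomposition step $\rho_r$ versus $\rho_l$, and then to propagate it through the recursion. First I would verify the base-level correspondence: for any $w\in\Sigma^+$, one has $\tau_r(\mi(w))=\tau_l(w)$, because $(u,v,n)\in R_w$ for the left-analogue decomposition (i.e.\ $w=v^n u$) holds if and only if $(\mi(u),\mi(v),n)$ lies in the right-decomposition set $R_{\mi(w)}$ (i.e.\ $\mi(w)=\mi(u)\,\mi(v)^n$), since $\mi$ is an anti-automorphism of $\Sigma^*$ that preserves length and maps $\Sigma^+$ bijectively to itself. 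The same bijection shows that the auxiliary sets defining $\theta_r$ and $\theta_l$ are carried onto each other by $\mi$ (again using that $\tau_l(u)\neq 1 \iff \tau_r(\mi(u))\neq 1$, and that $|\mi(v)|=|v|$), so the $\min$/$\max$ selections agree and $\theta_r(\mi(w))=\theta_l(w)$. Consequently, if $\rho_l(w)=(w',u,m)$ (the left-decomposition triple, meaning $w=u^m w'$), then $\rho_r(\mi(w))=(\mi(w'),\mi(u),m)$.

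Next I would run the two recursions in parallel. Write the left decomposition of $w$ as producing $w=w^{(0)},w^{(1)},\dots,w^{(j)}$ with $(w^{(i+1)},u_i,m_i)=\rho_l(w^{(i)})$ and $w^{(j+1)}=\lambda$, so that $\pn_l(w)=u_0^{m_0}u_1^{m_1}\cdots u_j^{m_j}$. Applying the single-step identity established above, an easy induction on $i$ shows that the right decomposition of $\mi(w)$ produces exactly $\mi(w^{(0)}),\mi(w^{(1)}),\dots,\mi(w^{(j)})$ with $(\mi(w^{(i+1)}),\mi(u_i),m_i)=\rho_r(\mi(w^{(i)}))$; the stopping index is the same ($\mi(w^{(i)})=\lambda \iff w^{(i)}=\lambda$). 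Reading off the normal form of $\mi(w)$ from this data and recalling that $\pn_r$ lists the components from last-produced to first-produced (the reverse order relative to $\pn_l$), we obtain exactly $\pn_r(\mi(w))=[\mi(u_j)]^{m_j}\cdots[\mi(u_1)]^{m_1}[\mi(u_0)]^{m_0}$, which is the first displayed claim. The second claim, $\pn_l(\mi(w))=[\mi(v_0)]^{n_0}[\mi(v_1)]^{n_1}\cdots[\mi(v_k)]^{n_k}$, follows by the same argument with the roles of left and right interchanged, or simply by applying the first claim to $\mi(w)$ in place of $w$ and using $\mi(\mi(w))=w$.

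The only genuine obstacle is the single-step lemma, and within it the slightly delicate point is checking that the optional $\min$ clause in the $\tau=1$ case of $\theta$ transfers correctly: one must confirm that the side condition ``$\tau_r(u)\neq 1$'' appearing in the definition of $\theta_r$ for the suffix $u$ really does correspond, under $\mi$, to the side condition ``$\tau_l(\cdot)\neq 1$'' for the matching prefix in the left decomposition, and that emptiness of one selection set forces emptiness of the other (so that both fall back on $\theta=|w|$ simultaneously). This is a direct consequence of the bijection $(u,v,n)\leftrightarrow(\mi(u),\mi(v),n)$ together with $\tau_r(\mi(x))=\tau_l(x)$, but it is where care is needed. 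Everything else---length preservation, the anti-homomorphism property $\mi(xy)=\mi(y)\mi(x)$, and the bookkeeping of indices---is routine. I expect the whole proof to be short once the single-step identity is stated cleanly.
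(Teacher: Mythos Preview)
Your proposal is correct and is precisely the argument one has in mind when saying the result ``holds directly by the definitions'': the paper does not spell out a proof beyond that remark, whereas you have carefully unpacked the single-step correspondence $\rho_r(\mi(w))\leftrightarrow\rho_l(w)$ (including the delicate $\tau=1$ clause for $\theta$) and then propagated it through the recursion. So your approach is the same in spirit as the paper's, only more detailed.
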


\subsection{General Prints}

\begin{definition}[General Prints]\label{DefGeneralPr}
Suppose $w\in\Sigma^+$. Let $$\pn_r(w)=v_{k}^{n_{k}}\cdots v_1^{n_1}v_0^{n_0} \text{ and } \pn_l(w)={u}_0^{m_0}{u}_{1}^{m_{1}}\cdots {u}_j^{m_j}$$ 
for some integers $j,k\in\mathbb{N}$, $m_i\in\mathbb{Z}^+\,(0\le i\le j)$ and $n_i\in\mathbb{Z}^+\,(0\le i\le k)$, and words $u_i\in\Sigma^+\,(0\le i\le j)$ and $v_i\in\Sigma^+\,(0\le i\le k)$. 
\begin{enumerate}
\item The \textit{right general print} of $w$, denoted by $\gpr_R(w)$, is the word $v_{k}\cdots v_1v_0$.
\item The \textit{left general print} of $w$, denoted by $\gpr_L(w)$, is the word $u_0u_1\cdots u_j$. 
\end{enumerate}
\end{definition}

\begin{remark}\label{RemGeneralPrint}
Express a word $w\in\Sigma^+$ in the form $y^{n_1}_1y^{n_2}_2\cdots y^{n_k}_k$ such that $y_i\in\Sigma^*$ and $n_i>0$ for all $1\le i\le k$ with $y_i\neq y_{i+1}$ for all $1\le i\le k-1$. Informally, the word $y_1y_2\cdots y_k$ can be regarded as a general print (associated to the decomposition) of $w$ . However, in this paper, we study only the ones as in Definition~\ref{DefGeneralPr} as they are the two natural canonical forms. The notion of a general print of a word is in fact a generalization of the notion of the print (see Definition~\ref{DefSerbPrint}) of a word.
\end{remark}

\begin{example}
Suppose $\Sigma=\{a,b,c\}$ and consider the word $w=cabccabc$. Then $\pn_r(w)=\pn_l(w)=(cabc)^2$. Thus both the left and right general prints of $w$ are the same, which is $cabc$. On the other hand, the (\c{S}erb\v{a}nu\c{t}\v{a}'s) print of $w$ is $cabcabc$.
\end{example}

The following assertion holds by the definitions and some simple observation.
\begin{remark}
For any word $w\in\Sigma^+$, if $\mi(w)=w$, then $\gpr_R(w)=\gpr_L(w)$.
\end{remark}

\begin{proposition}\label{PropMirrorGpr}
For every $w\in\Sigma^+$, we have
\begin{enumerate}
\item $\gpr_R(w)=\mi(\gpr_L(\mi(w)))$;
\item $\gpr_L(w)=\mi(\gpr_R(\mi(w)))$.
\end{enumerate}
\end{proposition}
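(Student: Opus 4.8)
The plan is to derive Proposition~\ref{PropMirrorGpr} as an immediate corollary of Proposition~\ref{MirrorImageProperty}, which already records how $\pn_r$ and $\pn_l$ interact with the mirror-image operation $\mi$. Concretely, I would start from a word $w\in\Sigma^+$, apply $\mi$ to obtain $\mi(w)$, and write down the two canonical decompositions $\pn_r(\mi(w))$ and $\pn_l(\mi(w))$ using the notation fixed in Proposition~\ref{MirrorImageProperty}. Since Proposition~\ref{MirrorImageProperty} tells us that $\pn_l(\mi(w))=[\mi(v_0)]^{n_0}[\mi(v_1)]^{n_1}\cdots[\mi(v_k)]^{n_k}$ whenever $\pn_r(w)=v_k^{n_k}\cdots v_1^{n_1}v_0^{n_0}$, reading off the left general print of $\mi(w)$ from Definition~\ref{DefGeneralPr} gives $\gpr_L(\mi(w))=\mi(v_0)\mi(v_1)\cdots\mi(v_k)$.

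Next I would use the elementary fact that $\mi$ is an anti-automorphism of $\Sigma^*$, i.e.\ $\mi(xy)=\mi(y)\mi(x)$ and $\mi(\mi(x))=x$ for all $x,y\in\Sigma^*$. Applying this to the expression just obtained yields
\[
\mi\bigl(\gpr_L(\mi(w))\bigr)=\mi\bigl(\mi(v_0)\mi(v_1)\cdots\mi(v_k)\bigr)=v_k\cdots v_1v_0=\gpr_R(w),
\]
which is exactly part~(1). Part~(2) follows by the same argument with the roles of $\pn_r$ and $\pn_l$ (equivalently, of $\gpr_R$ and $\gpr_L$) interchanged, using the other bullet of Proposition~\ref{MirrorImageProperty}; alternatively, one simply substitutes $\mi(w)$ for $w$ in part~(1) and applies $\mi$ to both sides, invoking $\mi(\mi(w))=w$ once more.

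There is essentially no obstacle here: the content is entirely bookkeeping on top of Proposition~\ref{MirrorImageProperty} together with the anti-homomorphism property of $\mi$. The only point requiring a modicum of care is making sure the index orders match up—$\pn_r$ lists its factors with descending indices ($v_k,\dots,v_0$) while $\pn_l$ lists them with ascending indices ($u_0,\dots,u_j$)—so that when $\mi$ reverses a product the resulting order is precisely the one demanded by Definition~\ref{DefGeneralPr} for the other side. Once that alignment is verified, both equalities drop out in one line each.
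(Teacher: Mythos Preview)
Your proposal is correct and mirrors the paper's own proof almost line for line: the paper also reduces to part~(1), invokes Proposition~\ref{MirrorImageProperty} to write $\pn_l(\mi(w))=[\mi(v_0)]^{n_0}\cdots[\mi(v_k)]^{n_k}$, reads off $\gpr_L(\mi(w))=\mi(v_0)\cdots\mi(v_k)$, and then applies $\mi$ to recover $v_k\cdots v_1 v_0=\gpr_R(w)$. Your remark that part~(2) follows by substituting $\mi(w)$ for $w$ is exactly how the paper handles it as well.
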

\begin{proof}
It suffices to prove $(1)$ as $(2)$ follows immediately from $(1)$.

Let $\pn_r(w)=v_{k}^{n_{k}}\cdots v_1^{n_1}v_0^{n_0}$ for some integers $k\in\mathbb{N}$, $n_i\in\mathbb{Z}^+\,(0\le i\le k)$ and words \mbox{$v_i\in\Sigma^+\,(0\le i\le k)$.} Then, by Proposition~\ref{MirrorImageProperty}, we have 
$\pn_l(\mi(w))=[\mi({v}_0)]^{n_0}[\mi({v}_1)]^{n_1}\cdots [\mi({v}_{k})]^{n_k}$.
Correspondingly, we have $$\gpr_R(w)=v_k\cdots v_1v_0 \text{ and } \gpr_L(\mi(w))=\mi({v}_0)\mi({v}_1)\cdots \mi({v}_k).$$
It remains to see that 
\begin{center}
\begin{tabular}{lcl}
$\mi(\gpr_L(\mi(w)))$&$=$&$\mi(\mi(v_0)\mi(v_1)\cdots \mi(v_k))$\\
&$=$&$\mi(\mi(v_0))\mi(\mi(v_1))\cdots \mi(\mi(v_k))$\\
&$=$&$v_k\cdots v_1v_0$\\
&$=$&$\gpr_R(w)$.
\end{tabular}
\end{center}
\end{proof}

The following shows that for a general alphabet, in the case where the right and left general prints of a word are different, the respective lengths of the general prints can either be the same or different.

\begin{example}
Suppose $a,b\in\Sigma$. Consider the words $w=babaabaa$ and $w'=babaa$. Then 
\begin{itemize}
\item $\pn_r(w)=ba(baa)^2$ and $\pn_l(w)=(ba)^2aba^2$, hence $\gpr_R(w)=babaa$ and $\gpr_L(w)=baaba$.
\item $\pn_r(w')=baba^2$ and $\pn_l(w')=(ba)^2a$, hence $\gpr_R(w')=baba$ and $\gpr_L(w')=baa$.
\end{itemize}
\end{example}

\begin{question}
Out of the all the possible general prints (see Remark~\ref{RemGeneralPrint}) of a word $w\in\Sigma^+$, does $\gpr_R(w)$ or $\gpr_L(w)$ give you the one of the shortest length?
\end{question}

The answer is no. Suppose $a,b,c\in\Sigma$. Consider the word $w=abcbcbcabcbcbca$. We have $\pn_r(w)=a(bcbcbca)^2$ and  $\pn_l(w)=(abcbcbc)^2a$. Therefore $\gpr_R(w)=\gpr_L(w)=abcbcbca$. However, notice that another possible decomposition of $w$ is $a(bc)^3a(bc)^3a$. This gives a shorter general print of $w$, which is $abcabca$. 

The shortest general print of a word is however not necessarily unique. For instance, consider the word $w=abababcbcbc$. We have $\pn_r(w)=a(ba)^2(bc)^3$ and $\pn_l(w)=(ab)^3(cb)^2c$, hence $\gpr_R(w)=ababc$ and $\gpr_L(w)=abcbc$. It can be easily verified that both $\gpr_R(w)$ and $\gpr_L(w)$ are the shortest general prints of $w$.

\subsection{Core Prints}

\begin{definition}[Core Prints]\label{DefCorePrint}
Suppose $w\in\Sigma^+$. Let $w_0=w'_0=w$. For all integers $i\ge 0$, recursively define $w_{i+1}=\gpr_R(w_i)$ and $w'_{i+1}=\gpr_L(w'_i)$. Let $I$ (respectively $I'$) be the least nonnegative integer such that $w_{I}=w_{I+1}$ (respectively $w'_{I'}=w'_{I'+1}$).
\begin{enumerate}[leftmargin=2em]
\item The \textit{right core print} of $w$, denoted by $\cpr_R(w)$, is the word $w_{I}$.
\item The \textit{left core print} of $w$, denoted by $\cpr_L(w)$, is the word $w'_{I'}$.
\end{enumerate}
Let $l_R(w)$ and $l_L(w)$ denote the integers $I$ and $I'$ respectively.
\end{definition}

\newpage

\begin{remark}\label{SqFreeGenPrint}
For every $w\in\Sigma^+$, the following are equivalent:
\begin{itemize}
\item $\gpr_R(w)=w$;
\item $\gpr_L(w)=w$;
\item $\cpr_R(w)=w$;
\item $\cpr_L(w)=w$;
\item $w$ is square-free.
\end{itemize}
\end{remark}

\begin{remark}\label{CorePrBinary}
Suppose $\Sigma=\{a,b\}$ and $w\in\Sigma^+$. Then $\cpr_R(w),\cpr_L(w)\in\{a,b,ab,ba,aba,bab\}$.
\end{remark}

\begin{theorem}\label{CorePrintSame}
Suppose $|\Sigma|=2$. For every $w\in\Sigma^+$, we have $\cpr_R(w)=\cpr_L(w)$. 
\end{theorem}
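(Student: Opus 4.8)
The plan is to isolate a few simple features of a word that are preserved when one passes to its right (or left) general print, and then observe that, by Remark~\ref{CorePrBinary}, these features already determine the core print. First I would fix $w\in\Sigma^+$ and write $\pn_r(w)=v_k^{n_k}\cdots v_1^{n_1}v_0^{n_0}$ with each $v_i\in\Sigma^+$ and each $n_i\ge 1$, so that $w=v_k^{n_k}\cdots v_0^{n_0}$ is a concatenation of positive powers of the blocks $v_i$, whereas $\gpr_R(w)=v_k\cdots v_1v_0$ is a concatenation of exactly one copy of each. From this I would read off three invariants: the first letter of $w$ equals the first letter of $v_k$, hence equals the first letter of $\gpr_R(w)$; the last letter of $w$ equals the last letter of $v_0$, hence equals the last letter of $\gpr_R(w)$; and the set of letters occurring in $w$ equals the union of the sets of letters occurring in the $v_i$, which is precisely the set of letters occurring in $\gpr_R(w)$. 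Using the mirror-image identity $\gpr_L(w)=\mi(\gpr_R(\mi(w)))$ of Proposition~\ref{PropMirrorGpr} (together with the fact that $\mi$ swaps first and last letters and leaves the set of occurring letters unchanged), the same three invariants are preserved by $\gpr_L$.

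Iterating $\gpr_R$ down to the fixed point, and likewise $\gpr_L$, I would conclude that $\cpr_R(w)$ and $\cpr_L(w)$ each have the same first letter, last letter, and set of occurring letters as $w$; in particular $\cpr_R(w)$ and $\cpr_L(w)$ agree on all three of these data. Now I invoke Remark~\ref{CorePrBinary}: since $|\Sigma|=2$, both $\cpr_R(w)$ and $\cpr_L(w)$ lie in $S=\{a,b,ab,ba,aba,bab\}$. A direct check shows the map from $S$ to triples (first letter, last letter, set of occurring letters) is injective, the six triples
$$ (a,a,\{a\}),\ (b,b,\{b\}),\ (a,b,\{a,b\}),\ (b,a,\{a,b\}),\ (a,a,\{a,b\}),\ (b,b,\{a,b\}) $$
being pairwise distinct. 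Since $\cpr_R(w)$ and $\cpr_L(w)$ give the same triple, they are the same element of $S$, i.e.\ $\cpr_R(w)=\cpr_L(w)$.

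There is no substantive obstacle here beyond the bookkeeping above; the one point needing care is the verification that $\gpr_R$ and $\gpr_L$ really do preserve the first letter, last letter, and set of occurring letters, and this rests entirely on the fact that in the Parikh normal form $v_k^{n_k}\cdots v_0^{n_0}$ every exponent $n_i$ is strictly positive. (If one wishes to avoid citing Remark~\ref{CorePrBinary}, one may instead argue directly that a core print is a fixed point of $\gpr_R$, hence square-free by Remark~\ref{SqFreeGenPrint}, and that a square-free binary word has length at most three, which yields exactly the six candidates in $S$.)
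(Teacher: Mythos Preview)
Your proposal is correct and follows essentially the same approach as the paper: both arguments rest on the observation that the first letter, the last letter, and the set of occurring letters are invariant under passage to a general print, together with Remark~\ref{CorePrBinary} to pin down the unique element of $\{a,b,ab,ba,aba,bab\}$ compatible with those data. The only difference is one of presentation: the paper proceeds by a direct case split on the shape of $w$ (namely $a^p$, $b^p$, $ayb$, $bya$, $aya$, $byb$) and asserts the invariance implicitly, whereas you isolate the three invariants explicitly, prove they are preserved by $\gpr_R$ (and via Proposition~\ref{PropMirrorGpr} by $\gpr_L$), and then check injectivity on the six candidates---a slightly more systematic packaging of the same idea.
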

\begin{proof}
Let $\Sigma=\{a<b\}$. If $w$ is either $a^p$ or $b^p$ for some positive integers $p$, then the conclusion trivially holds. 

Assume $w=ayb$ for some $y\in\Sigma^*$. Then, both $\cpr_R(w)$ and $\cpr_L(w)$ must start with a letter $a$ and end with a letter $b$. By Remark~\ref{CorePrBinary}, this is only possible if $\cpr_R(w)=ab=\cpr_L(w)$. By similar argument, it can be shown that if $w=bya$ for some $y\in\Sigma^*$, then $\cpr_R(w)=ba=\cpr_L(w)$.

Assume $w=aya$ for some $y\in\Sigma^*$ with $|y|_b\ge 1$. Then, both $\cpr_R(w)$ and $\cpr_L(w)$ must start and end with a letter $a$ and contain at least one letter $b$ in between. By Remark~\ref{CorePrBinary}, this is only possible if $\cpr_R(w)=aba=\cpr_L(w)$. By similar argument, it can be shown that if $w=byb$ for some $y\in\Sigma^*$ with $|y|_a\ge 1$, then $\cpr_R(w)=bab=\cpr_L(w)$. Thus our conclusion holds.
\end{proof}

Theorem~\ref{CorePrintSame} however cannot be extended to cater for larger alphabets, as illustrated in the following example.
\begin{example}\label{ExampleCoreDiff}
Suppose $a,b,c\in\Sigma$. Consider the word $w=ababcbabc$. 
We have
\begin{itemize}
\item \begin{itemize}
\item $\pn_r(w)=a(babc)^2$, thus $w_1=\gpr_R(w)=ababc$;
\item $\pn_r(w_1)=(ab)^2c$, thus $w_2=\gpr_R(w_1)=abc$;
\item $\pn_r(w_2)=abc$, thus $w_3=\gpr_R(w_2)=abc$.
\end{itemize}
\item \begin{itemize}
\item $\pn_l(w)=(ab)^2cbabc$, thus $w_1=\gpr_L(w)=abcbabc$;
\item $\pn_l(w_1)=abcbabc$, thus $w_2=\gpr_L(w_1)=abcbabc$;
\end{itemize}
\end{itemize}
Therefore, $\cpr_R(w)=abc\neq abcbabc=\cpr_L(w)$.
\end{example}
The word in Example~\ref{ExampleCoreDiff}, which is of length nine, is in fact a counterexample of the shortest length. The other such words are listed below:

\begin{center}
\begin{tabular}{ c c c c }
$cbcbabcba$ & $abcbabcbc$ & $acacbcacb$ & $acbcacbcb$\\ 
$babacabac$ & $bacabacac$ & $bcacbcaca$ & $bcbcacbca$\\  
$cabacabab$ & $cacabacab$ & $cbabcbaba$ &
\end{tabular}
\end{center}
Interestingly, there are only 12 such words out of $3^9=19683$ ternary words of length $9$.

Every word over $\Sigma$ corresponds to a unique \mbox{sequence} of decompositions to attain the right (respectively left) core print of that word. We now introduce a function that \mbox{captures}, for every positive integer $n$, the maximal length of such sequences with respect to the set of all words over $\Sigma$ with length $n$.

\begin{definition}\label{DefCharFunct}
Suppose $r\ge 2$ is an integer. The \textit{core print characteristic function} of order $r$ is the function $\zeta_r:\mathbb{Z}^+\rightarrow\mathbb{N}$ defined as
\begin{equation*}
\zeta_r(n)=\max\{k\in\mathbb{N}\,\,|\,\,l_R(w)=k\text{ for some }w\in\Sigma^*\text{ with }|w|=n\}
\end{equation*}
where $\Sigma$ is any alphabet with $|\Sigma|=r$.
\end{definition}

\begin{remark}
In general, for a word $w$, the values of $l_R(w)$ and $l_L(w)$ may not be the same.
However, by some simple analysis and Proposition~\ref{PropMirrorGpr}, one could see that $l_R(w)=l_L(\mi(w))$ for any $w\in\Sigma^*$. Thus changing the condition $l_R(w)=k$ in the definition of $\zeta_r(n)$ to $l_L(w)=k$ does not alter the function. The current choice is simply a matter of preference.
\end{remark}

Appendix~\ref{A1} exhausts the values of $\zeta_2(n)$ for every integer $1\le n\le 30$. 
The following are the (only) words $w\in\{a,b\}^*$ with length 30 such that $l_R(w)=6=\zeta_2(30)$. (Meanwhile, there are 25924760 words $w$ with length 30 such that $l_R(w)=5$.)

\vspace{0.7em}\begin{center}
        \begin{tabular}{ll}
abaababaaabaaabaabbaabbbaabbab, & abaababaaabaababbaaabbbaaabbab,\\
abaababaaababaabbaaabbbaaabbab, & abaababaaabbaaabbaabaabbaabbab,\\
abaababaaabbaaabbaabaabbabbaab, & abaababaabbaaaabbaabaabbaabbab,\\
abaababaabbaaaabbaabaabbabbaab, & abaababaabbaaabaabbaabbbaabbab,\\
abaababaabbaababbaaabbbaaabbab, & babbababbbabbbabbaabbaaabbaaba,\\
babbababbbabbabaabbbaaabbbaaba, & babbababbbababbaabbbaaabbbaaba,\\
babbababbbaabbbaabbabbaabbaaba, & babbababbbaabbbaabbabbaabaabba,\\
babbababbaabbbbaabbabbaabbaaba, & babbababbaabbbbaabbabbaabaabba,\\
babbababbaabbbabbaabbaaabbaaba, & babbababbaabbabaabbbaaabbbaaba.
        \end{tabular}
\end{center}

For instance, one can see that for the first word in the above list, the path to attain its right core print is as follows:
\begin{example}
Let $w_0=abaababaaabaaabaabbaabbbaabbab$. We have
\begin{itemize}
\item $\pn_r(w_0)=aba(ab)^2a^3ba(aab)^2(baabb)^2ab$, thus $w_1=\gpr_R(w_0)$ \\$=abaababaaabbaabbab$;
\item $\pn_r(w_1)=aba(ab)^2a^2(abba)^2b$, thus $w_2=\gpr_R(w_1)=abaabaabbab$;
\item $\pn_r(w_2)=a(baa)^2b^2ab$, thus $w_3=\gpr_R(w_2)=abaabab$;
\item $\pn_r(w_3)=aba(ab)^2$, thus $w_4=\gpr_R(w_3)=abaab$;
\item $\pn_r(w_4)=aba^2b$, thus $w_5=\gpr_R(w_3)=abab$;
\item $\pn_r(w_5)=(ab)^2$, thus $w_6=\gpr_R(w_3)=ab$.
\item $\pn_r(w_6)=ab$, thus $w_7=\gpr_R(w_6)=ab$.
\end{itemize}
\end{example}
 
\begin{remark}
For all integers $r\ge 2$ and $n\ge 1$, we have $\zeta_{r+1}(n)\ge \zeta_r(n)$. 
\end{remark}
For the case of $\zeta_3(n)$, we have computationally checked that $\zeta_3(n)=\zeta_2(n)$ for every integer $1\le n\le 14$ but $5=\zeta_3(15)\neq\zeta_2(15)=4$. An example of a word $w\in\{a,b,c\}^*$ with length 15 such that $l_R(w)=5$ is $cbaccaacacaacba$.

Appendix~\ref{A1} also suggests the possibility that the function $\zeta_2$ is nondecreasing. In general, appending a letter to the right or left of a word $w$ may reduce the value of $l_R(w)$. The following shows an extreme-case example of this.

\begin{example}
Suppose $\Sigma=\{a,b\}$ and consider the word $w=abaabbabbbabb$ of length 13. We have $l_R(w)=4=\zeta_2(13)$. However, $l_R(aw)=l_R(bw)=l_R(wa)=3$ and $l_R(wb)=2$.
\end{example}
However, if the following more general assertion holds, then the monotonicity of the function $\zeta_2$ is implied directly.
\begin{conjecture}\label{ConjecturePath}
For any word $w\in\Sigma^+$, a letter $x\in\Sigma$ can be inserted into $w$ to obtain a word $w'$ such that $l_R(w')\ge l_R(w)$.
\end{conjecture}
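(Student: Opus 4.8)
The plan is to prove Conjecture~\ref{ConjecturePath} by induction on $l_R(w)$, with the key idea being to track how an insertion site in $w$ propagates through the decomposition $\pn_r$. The base case $l_R(w)=0$ is immediate: here $w$ is square-free, and inserting any letter giving a word $w'$ we trivially have $l_R(w')\ge 0=l_R(w)$ (this uses Remark~\ref{SqFreeGenPrint}). For the inductive step, suppose $l_R(w)=m+1\ge 1$, so $\gpr_R(w)\neq w$ and $l_R(\gpr_R(w))=m$. Write $\pn_r(w)=v_k^{n_k}\cdots v_1^{n_1}v_0^{n_0}$ so that $\gpr_R(w)=v_k\cdots v_1 v_0$. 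By the inductive hypothesis applied to $\gpr_R(w)$, there is a letter $x$ and a position in $\gpr_R(w)$ at which inserting $x$ yields a word $z$ with $l_R(z)\ge m$.

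The crucial step is then to lift this insertion from $\gpr_R(w)$ back up to $w$: I would insert the \emph{same} letter $x$ into $w$ at the corresponding position inside the appropriate block $v_i^{n_i}$ (concretely, inside the \emph{last} copy of $v_i$ in that block, so that the other copies remain untouched), obtaining a word $w'$. The goal is to show that this insertion is ``compatible'' with the right decomposition in the sense that $\gpr_R(w')$ contains $z$ as a factor in a controlled way, or more precisely that $l_R(\gpr_R(w'))\ge m$, which then gives $l_R(w')\ge m+1=l_R(w)$ as required. The point is that the blocks $v_j^{n_j}$ for $j\neq i$, together with the untouched copies of $v_i$, should still be recognized as repetitions by $\pn_r(w')$; only the block containing the insertion is disturbed, and there the modified copy $v_i'$ (with $x$ inserted) plays the role that the single letter/factor played one level down. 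One must check that modifying one block of $\pn_r(w)$ in this way does not accidentally destroy the repetitive structure of the neighbouring blocks or merge blocks in a way that shortens the reduction path; this is where the precise definitions of $\tau_r$ and $\theta_r$ (the maximality/minimality conditions selecting the decomposition) have to be invoked carefully.

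The main obstacle I anticipate is exactly this compatibility argument: the right decomposition $\pn_r$ is defined by a delicate sequence of maximizations and length-extremizations, and it is not a priori clear that inserting a letter into one block leaves the decomposition of the rest of the word unchanged. In particular, an insertion could, in principle, create a longer repeated suffix that $\tau_r$ would then prefer, collapsing several of the original steps into one and \emph{shortening} the path rather than lengthening it. Handling this cleanly will likely require either (a) choosing the insertion position within the block $v_i$ very carefully — e.g.\ avoiding the block boundaries and avoiding positions that would extend a border of $v_i$ — or (b) proving a lemma to the effect that $\gpr_R$ is ``locally monotone'' under such controlled insertions, i.e.\ that if $z$ is obtained from $\gpr_R(w)$ by inserting $x$ at a non-boundary position, then some insertion of $x$ into $w$ has right general print equal to $z$ (or to a word with $l_R$ at least $l_R(z)$). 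I would first attempt route (a), treating separately the cases $n_i\ge 2$ (where spare copies of $v_i$ guarantee the block is still seen as a power) and $n_i=1$ (where the block is a single factor and the insertion analysis reduces essentially to the one-letter case handled by the inductive hypothesis at the next level up); if subtle boundary interactions arise, route (b) via an auxiliary lemma on insertions and $\pn_r$ would be the fallback. The binary-alphabet computations in Appendix~\ref{A1} and the extreme-case example above suggest the statement is true but genuinely non-trivial, so I would not expect a one-line argument.
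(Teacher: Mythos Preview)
The statement you are attempting to prove is \emph{Conjecture}~\ref{ConjecturePath} in the paper, and the paper does not supply a proof of it: it is explicitly left open. The authors introduce it as a statement that, \emph{if true}, would imply the monotonicity of $\zeta_2$, and then move on. So there is no ``paper's own proof'' to compare your proposal against; what you have written is a strategy for attacking an open problem, not a reconstruction of an existing argument.

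As a strategy, your inductive lifting idea is natural, and you have correctly identified the real difficulty: the right decomposition $\pn_r$ is governed by global extremality conditions (first maximize the exponent $\tau_r$, then extremize $\theta_r$), so a local insertion in one block can, in principle, change which suffix is selected at an earlier stage and completely redraw the block boundaries. Your proposed remedy --- inserting into the last copy of $v_i$ and hoping the other blocks survive --- is exactly where a genuine proof would have to do real work, and neither route (a) nor route (b) is obviously feasible: the paper's own extreme example $w=abaabbabbbabb$ shows that \emph{every} one-letter extension at either end strictly decreases $l_R$, so any successful argument must sometimes use interior positions, and the interaction between an interior insertion and the greedy suffix choices of $\pn_r$ is precisely what is not understood. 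In short, your outline is a reasonable plan of attack, but the ``crucial step'' you flag is the entire content of the conjecture, and nothing in the paper (or your proposal) gives a mechanism to control it.
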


\section{On Square-free Words and \textit{M}-equivalence over the Ternary Alphabet}

A notion often investigated when dealing with the subject of repetition in words is square-freeness. In Section~\ref{SecGeneralPrint}, we see that the square-free property of a word has direct consequences on the general prints and core prints of that word (see Remark~\ref{SqFreeGenPrint}).

We now investigate the class of square-free words over the ternary alphabet with respect to the Parikh matrix mapping. In particular, we present new results on square-free ternary words pertaining to the notion of $M$\!-equivalence. 

\begin{lemma}\label{LemmaStructureSquareFree}
Suppose $\Sigma=\{a<b<c\}$ and $w\in\Sigma^*$ with $|w|_b=k\ge 3$. Assume every word in $C_w$ is square free. Write $w=u_0bu_1bu_2\cdots bu_k$ where $u_i\in\{a,c\}^*$ for every integer $0\le i\le k$. Then, $u_i\in\{a,c\}$ for every integer $1\le i\le k-1$ and $u_0,u_k\in\{\lambda,a,c\}$ such that $u_i\neq u_{i+1}$ for every integer $0\le i\le k-1$.
\end{lemma}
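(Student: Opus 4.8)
The plan is to argue by contraposition on the structural claims, repeatedly invoking Rules $E1$ and $E2$ to produce a word in $C_w$ that contains a square. Throughout, recall that Rule $E1$ lets us freely permute the relative order of $a$'s and $c$'s (since $ac \equiv_M ca$), so within any maximal $\{a,c\}$-block $u_i$ we may assume $u_i$ has any prescribed arrangement of its $a$'s and $c$'s, in particular we may sort it. This already shows: if some $u_i$ ($1\le i\le k-1$) contains two letters, say $x,y\in\{a,c\}$, we can move them next to a neighboring $b$; the key tool for creating a square will then be Rule $E2$, which swaps $\alpha b y b\alpha$ with $b\alpha y\alpha b$ for $\alpha\in\{a,c\}$ and $y\in\{\alpha,b\}^*$.

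First I would handle the claim that each interior block $u_i$ has length exactly one. Suppose $|u_i|\ge 2$ for some $1\le i\le k-1$. Using $E1$, write $u_i$ with a chosen first or last letter; the idea is to peel off a letter $x$ from one end of $u_i$ and, using the fact that the adjacent factor is $b u_i b$ or a longer $b$-flanked stretch, apply $E2$ (with $\alpha=x$ and $y$ built from the remaining letters of $u_i$ together with $b$'s, after using $E1$ to push all non-$x$ letters of the relevant blocks out of the way) to relocate $x$ across a $b$, producing an adjacent repeated factor such as $xbxb$, $bxbx$, or $x x$. Concretely, if $u_i$ contains at least one $a$ and the span looks like $\cdots b\, a\, v\, b\cdots$ with $v\in\{a,c\}^*$, sort via $E1$ so the $a$'s cluster, then an $E2$ move of the shape $abyba \leftrightarrow bayab$ with $y\in\{a,b\}^*$ yields a factor $aa$ or $bayaba$-type repetition; a short case analysis on whether $u_i$ is a power of a single letter or contains both $a$ and $c$ finishes this part. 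Similarly, the condition $u_0,u_k\in\{\lambda,a,c\}$: if $|u_0|\ge 2$, the leftmost stretch $u_0 b u_1 b\cdots$ has enough room (since $k\ge 3$) to run the same argument one-sidedly, and symmetrically for $u_k$.

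Finally, for the non-adjacency condition $u_i\ne u_{i+1}$, suppose $u_i=u_{i+1}=x$ for some $0\le i\le k-1$ with both of length one (length zero is already excluded for interior indices, and for $i=0$ or $i=k-1$ one uses $u_0,u_k\in\{\lambda,a,c\}$). Then $w$ contains the factor $x b x$ (if $i$ is interior) or we can create the factor $(xb)^2$ or $(bx)^2$ directly: indeed $x b x \cdots$ with a further $b$ on the far side, together with an $E2$ swap $x b y b x \leftrightarrow b x y x b$ where $y$ is the intervening $\{x,b\}^*$-word, turns $x b x b$-patterned content into the square $(bx)^2$; if the surrounding letters are not yet of the form $\{x,b\}^*$ we first apply $E1$ to evacuate the other symbol of $\{a,c\}$ from the window. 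The main obstacle I anticipate is the bookkeeping in these $E2$ applications: ensuring the intervening word $y$ genuinely lies in $\{\alpha,b\}^*$ as Rule $E2$ demands, which forces a careful preliminary use of $E1$ to segregate the two symbols $a$ and $c$, and a clean case split according to which of $a,c$ appears in the offending block and whether the block sits at the boundary or the interior of $w$. Once that segregation lemma is isolated, each of the three structural conclusions follows from a short, essentially identical square-production argument.
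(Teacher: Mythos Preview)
Your plan is not wrong in spirit, but you are working much too hard, and the parts you flag as ``the main obstacle'' are entirely self-inflicted. Two elementary observations collapse the lemma and eliminate Rule~$E2$ from the picture.

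First, the non-adjacency clause $u_i\neq u_{i+1}$ needs \emph{no} rewriting rules whatsoever. If $u_m=u_{m+1}$ as words (not merely as single letters), then for $0\le m\le k-2$ the word $w$ literally contains the factor $u_m b\,u_{m+1} b=(u_m b)^2$, and for $m=k-1$ it ends in $(b u_k)^2$. So $w$ itself fails to be square-free. You instead postpone non-adjacency to the end, reduce to single letters, observe only the non-square factor $xbx$, and then reach for an $E2$ swap; that is unnecessary.

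Second, for the length bound on the blocks, note that each $u_i\in\{a,c\}^*$ is a factor of the square-free word $w$, hence is itself square-free over the \emph{binary} alphabet $\{a,c\}$. There are only six such nonempty words: $a,\ c,\ ac,\ ca,\ aca,\ cac$. The cases $aca$ and $cac$ die immediately under $E1$ (e.g.\ $aca\equiv_M a^2c$). The cases $ac$ and $ca$ are dispatched by looking at a neighboring block (which exists since $k\ge 3$) and using a single $E1$ swap: for instance, if $u_m=ac$ and $u_{m+1}\in\{a,c,ca\}$ (the only remaining possibilities once $u_{m+1}\neq u_m$), then $acb\,a\equiv_M c(ab)^2\cdot\!/\!\cdot$, $acb\,c$ already contains $(cb)^2$ after the swap $ac\to ca$, and $acb\,ca\equiv_M (cab)^2$. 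No $E2$ is ever invoked.

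In short: prove $u_i\neq u_{i+1}$ first (trivially), then use square-freeness over $\{a,c\}$ to reduce to a six-word list, then kill the four bad entries with $E1$ alone. Your $E2$-based scheme might be pushed through, but the sketch for $u_i\in\{ac,ca\}$ is too vague to verify as written, and the ``segregation lemma'' you anticipate needing is a sign that the argument has gone off the simplest path.
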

\begin{proof}
Since every word in $C_w$ is square-free (by the hypothesis), clearly $w$ is square-free as well. To show that $u_i\neq u_{i+1}$ for every integer $0\le i\le k-1$, we argue by contradiction. Assume $u_m=u_{m+1}$ for some integer $0\le m\le k-1$. Notice that if $0\le m\le k-2$, then $$w=yu_mbu_{m+1}by'=yu_mbu_mby'=y(u_mb)^2y'$$ for some $y,y'\in\Sigma^*$. On the other hand, if $m=k-1$, then $$w=u_0bu_1\cdots bu_{k-1}bu_k=u_0bu_1\cdots bu_kbu_k=u_0bu_1\cdots (bu_k)^2.$$ In both cases, we have a contradiction as $w$ is square-free.

To prove the remaining part of the assertion, note that for every integer $1\le i\le k-1$, the word $u_i$ has to be nonempty. Otherwise, the square $b^2$ will be a factor in $w$. At the same time, for every integer $0\le i\le k$, the word $u_i$ has to be square-free as well. 

The only square-free words over the alphabet $\{a,c\}$ are $a,c,ac,ca,aca$ and $cac$. Assume $u_m=aca$ for some integer $0\le m\le k$. Then $w=yacay'$ for some $y,y'\in\Sigma^*$. Observe that $w=yacay'\equiv_M yaacy'=ya^2cy'$, thus $ya^2cy'\in C_w$. However, this is impossible as every word in $C_w$ is square-free. Thus $u_i\neq aca$ for every integer $0\le i\le k$. Similarly, it can be shown that $u_i\neq cac$ for every integer $0\le i\le k$.

Assume $u_m=ac$ for some integer $0\le m\le k$.
\begin{case2}$0\le m\le k-2$.\\
Then $w=yacbu_{m+1}by'$ for some $y,y'\in\Sigma^*$. Since $u_{m+1}\not\in\{\lambda,u_{m}\}$, it follows that $u_{m+1}\in\{a,c,ca\}$. Observe that 
\begin{enumerate}
\item if $u_{m+1}=a$, then $w=yacbaby'\equiv_M ycababy'=yc(ab)^2y'$;
\item if $u_{m+1}=c$, then $w=yacbcby'=ya(cb)^2y'$;
\item if $u_{m+1}=ca$, then $w=yacbcaby'\equiv_M ycabcaby'=y(cab)^2y'$.
\end{enumerate}
Each case leads to a square word in $C_w$, thus a contradiction.
\end{case2}

\begin{case2}$k-1\le m\le k$.\\
Then $w=ybu_{m-1}bacy'$ for some $y,y'\in\Sigma^*$ because $k\ge 3$. Since $u_{m-1}\not\in\{\lambda,u_{m}\}$, it follows that $u_{m-1}\in\{a,c,ca\}$. From here, argue similarly as in Case 1. 
\end{case2}
Therefore, $u_i\neq ac$ for every integer $0\le i\le k$. Similarly, it can be shown that $u_i\neq ca$ for every integer $0\le i\le k$. Thus we conclude that $u_i\in\{a,c\}$ for all integers $1\le i\le k-1$ and $u_1,u_k\in\{\lambda,a,c\}$.
\end{proof}

\begin{lemma}\label{ConditionforSquareFreeClass}
Suppose $\Sigma=\{a<b<c\}$ and $w\in\Sigma^*$. If $|w|_b>4$, then there exists some word in $C_w$ that is not square-free.
\end{lemma}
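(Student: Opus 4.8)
The plan is to argue by contradiction: suppose $|w|_b>4$ yet every word in $C_w$ is square-free. Put $k=|w|_b$, so $k\ge 5$. Since $w\in C_w$, the word $w$ itself is square-free, and since $k\ge 3$ all the hypotheses of Lemma~\ref{LemmaStructureSquareFree} are met.

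Applying Lemma~\ref{LemmaStructureSquareFree}, I would fix the factorization $w=u_0bu_1bu_2\cdots bu_k$ with $u_i\in\{a,c\}$ for every $1\le i\le k-1$, with $u_0,u_k\in\{\lambda,a,c\}$, and with $u_i\ne u_{i+1}$ for every $0\le i\le k-1$. Because $k\ge 5$, each of the indices $1,2,3,4$ lies in the range $1\le i\le k-1$, so $u_1,u_2,u_3,u_4$ are single letters of $\{a,c\}$ which strictly alternate; in particular $u_3=u_1$ and $u_4=u_2$. Consequently
\begin{equation*}
bu_1bu_2bu_3bu_4=bu_1bu_2bu_1bu_2=(bu_1bu_2)^2,
\end{equation*}
and this word is a factor of $w$, since $w=u_0bu_1bu_2\cdots bu_k$ and $u_1,u_2,u_3,u_4$ are letters. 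Hence $w$ contains a square, contradicting that $w$ is square-free. This contradiction establishes the lemma.

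Essentially all of the work has already been done in Lemma~\ref{LemmaStructureSquareFree}: once its rigid alternating block pattern is available, the present lemma reduces to the single observation that the length-$4$ block $bu_1bu_2$ must repeat, so I do not anticipate a genuine obstacle. The one point meriting a little care is ensuring that this repetition genuinely lies inside $w$ as a factor; this needs four consecutive interior blocks $u_1,u_2,u_3,u_4$, hence $k-1\ge 4$, which is exactly why the hypothesis must read $|w|_b>4$. (For $k=4$ the block $u_4$ would be the boundary block $u_k$, which may be empty, and indeed a square need not occur --- for instance $babcbab$ has four $b$'s and its $M$\!-equivalence class $C_{babcbab}$ is the singleton $\{babcbab\}$, a square-free word.)
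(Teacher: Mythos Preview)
Your proof is correct and follows essentially the same route as the paper: argue by contradiction, invoke Lemma~\ref{LemmaStructureSquareFree} to force the interior blocks $u_1,\dots,u_{k-1}$ to alternate between $a$ and $c$, and then observe that the first four interior blocks give the square $(bu_1bu_2)^2$ inside $w$. The paper splits into the two cases $u_1=a$ and $u_1=c$ explicitly, whereas you phrase it uniformly via $u_3=u_1$, $u_4=u_2$, but the argument is the same.
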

\begin{proof}
We argue by contradiction. Assume that $|w|_b>4$ and every word in $C_w$ is square-free. Since $|w|_b>4$, it follows that $w=u_0bu_1bu_2bu_3bu_4\cdots u_{k-1}bu_k$ for some integer $k\ge 5$ and $u_i\in\{a,c\}^*\,(0\le i\le k)$. By Lemma~\ref{LemmaStructureSquareFree}, it holds that $u_i\in\{a,c\}$ for every integer $1\le i\le k-1$ such that $u_i\neq u_{i+1}$ for every integer $1\le i\le k-1$.

Therefore, if $u_1=a$, then $w=u_0babcbabc\cdots u_{k-1}bu_k=u_0(babc)^2\cdots u_{k-1}bu_k$. On the other hand, if $u_1=c$, then $w=u_0bcbabcba\cdots u_{k-1}bu_k=u_0(bcba)^2\cdots u_{k-1}bu_k$. In both cases, we have a contradiction as $w$ is square-free. Thus our conclusion holds.
\end{proof}

Let $\Sigma=\{a<b<c\}$. The following is an exhaustive list of every $M$\!-equivalence class over $\Sigma$ such that all words in it are square-free. The list can be verified by some simple analysis supported by Lemma~\ref{LemmaStructureSquareFree} and Lemma~\ref{ConditionforSquareFreeClass}.

\vspace{0.8em}$\begin{gathered}
\{a\}, \quad \{c\}, \quad \{ac,ca\}, \quad \{b\}, \quad \{ab\}, \quad \{ba\}, \quad \{cb\}, \quad \{bc\},\\
\{abc\}, \quad \{cba\}, \quad \{acb,cab\}, \quad \{bac,bca\},\\
\{acba,caba\}, \quad \{acbc,cabc\}, \quad \{abac,abca\}, \quad \{cbac,cbca\},\\
\{acbac,acbca,cabac,cabca\},\\
\{abcb\}, \quad \{cbab\}, \quad \{babc\}, \quad \{bcba\}, \quad \{bacb,bcab\},\\
\{bacba,bcaba\}, \quad \{bacbc,bcabc\}, \quad \{abacb,abcab\}, \quad \{cbacb,cbcab\},\\
\{abcba\}, \quad \{cbabc\}, \quad \{cbacbc,cbcabc\}, \quad \{abacba,abcaba\},\\
\{babcb\}, \quad \{bcbab\}, \quad \{abcbab\}, \quad \{cbabcb\}, \quad \{babcba\}, \quad \{bcbabc\},\\
\{babcbab\}, \quad \{bcbabcb\}.
\end{gathered}$

\vspace{0.8em}\noindent One can see by the above list that the following holds.
\begin{theorem}\label{TheoForConcl}
Suppose $\Sigma$ is an ordered alphabet with $|\Sigma|=3$. There are only finitely many $M$\!-equivalence classes over $\Sigma$ such that all words in each class are square-free. Furthermore, every such class is a $1$-equivalence class.
\end{theorem}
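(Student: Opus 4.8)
The plan is to prove the two assertions separately, using throughout that $|w|_b$ is an $M$-invariant (by Remark~\ref{RemEntriesParMat}, $\Psi_\Sigma(w)$ determines $|w|_b$). For the finiteness assertion, let $C$ be an $M$-equivalence class all of whose members are square-free, and fix $w\in C$, so that $C=C_w$ and $w$ is square-free. The contrapositive of Lemma~\ref{ConditionforSquareFreeClass} gives $|w|_b\le 4$. Writing $w=u_0bu_1b\cdots bu_k$ with $k=|w|_b\le 4$ and $u_i\in\{a,c\}^*$, each $u_i$ is a factor of the square-free word $w$, hence a square-free word over the two-letter alphabet $\{a,c\}$; since the longest square-free words over a two-letter alphabet have length $3$, we get $|u_i|\le 3$ and therefore $|w|=\sum_{i=0}^{k}|u_i|+k\le 4k+3\le 19$. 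As a word determines its $M$-equivalence class and there are only finitely many words of length at most $19$, there are only finitely many classes $C$ of the required type. (With more care---using Lemma~\ref{LemmaStructureSquareFree} to force $|u_i|\le 1$ for $1\le i\le k-1$ when $k\ge 3$, and treating $k\le 2$ by hand---one recovers the explicit list of $38$ classes displayed above.)

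For the second assertion, the key structural fact I would isolate is that \emph{Rule $E2$ never applies to a square-free word}. Indeed, applying Rule $E2$ to $w$ requires $w$ to contain a factor $\alpha by b\alpha$ with $\alpha\in\{a,c\}$ and $y\in\{\alpha,b\}^*$; if $w$ is square-free then $y$ is square-free over $\{\alpha,b\}$, hence $y\in\{\lambda,\alpha,b,\alpha b,b\alpha,\alpha b\alpha,b\alpha b\}$, and checking these seven cases shows that $\alpha by b\alpha$ always contains one of the squares $bb$, $\alpha\alpha$, or $(\alpha b)^2$---contradicting square-freeness of $w$. Granting this, let $C$ be an $M$-equivalence class all of whose words are square-free and take $w,w'\in C$. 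Since, for the ternary alphabet, $M$-equivalence coincides with $ME$-equivalence \cite{AAP08}, there is a chain from $w$ to $w'$ consisting of single applications of Rules $E1$ and $E2$; every word in this chain lies in $C$ (Rules $E1$, $E2$ preserve $M$-equivalence), hence is square-free, so by the fact just established every step is an application of Rule $E1$. Thus $w$ is $1$-equivalent to $w'$. Since $w,w'\in C$ were arbitrary and every $1$-equivalence class is contained in an $M$-equivalence class, $C$ is exactly one $1$-equivalence class.

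The step I expect to be the main obstacle is the reliance on the completeness statement ``$M$-equivalence equals $ME$-equivalence over the ternary alphabet''. If one wishes to avoid it, the fallback is to carry the finiteness argument all the way to the explicit list of $38$ classes---the cases $|w|_b\in\{0,1,2\}$, where Lemma~\ref{LemmaStructureSquareFree} is unavailable, need a tedious but routine case analysis, including checking that no application of Rule $E1$ or $E2$ escapes each listed set to a non-square-free word---and then to verify by inspection that within every class on the list all members are obtained from one another by swapping adjacent occurrences of $ac$ and $ca$, that is, by Rule $E1$ alone; the four-element class $\{acbac,acbca,cabac,cabca\}$ is the only nontrivial instance, and it is visibly $E1$-connected. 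Either way the arithmetic and case checks are routine; the conceptual core is the observation about Rule $E2$ together with Lemmas~\ref{LemmaStructureSquareFree} and~\ref{ConditionforSquareFreeClass}.
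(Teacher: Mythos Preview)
Your proposal is correct and takes a genuinely different route from the paper's own argument.

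The paper proves Theorem~\ref{TheoForConcl} by brute force: it uses Lemmas~\ref{LemmaStructureSquareFree} and~\ref{ConditionforSquareFreeClass} to constrain the shape of candidate words, compiles the explicit list of $38$ classes displayed just before the theorem, and then simply observes from that list that there are finitely many and that each is a $1$-equivalence class. There is no separate conceptual argument for the second assertion; it is read off the enumeration.

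Your argument, by contrast, avoids the enumeration on both counts. For finiteness you extract a uniform length bound ($|w|\le 19$) directly from Lemma~\ref{ConditionforSquareFreeClass} plus the elementary fact that square-free words over $\{a,c\}$ have length at most $3$; this is cleaner and more transparent than the paper's listing, at the cost of not producing the sharp list. For the $1$-equivalence assertion your key observation---that Rule~$E2$ can never fire on a square-free word because every factor $\alpha b y b\alpha$ with $y\in\{\alpha,b\}^*$ square-free already contains $bb$, $\alpha\alpha$, or $(\alpha b)^2$---is a genuinely new ingredient not present in the paper, and together with the completeness of $E1$/$E2$ for ternary $M$-equivalence from~\cite{AAP08} it yields the conclusion without any case checking. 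You correctly flag the dependence on~\cite{AAP08} as the one external input; your fallback (carry out the full enumeration and inspect) is precisely what the paper actually does. In short: the paper's approach is exhaustive verification, yours is a structural shortcut that imports one outside theorem; each is valid, and yours explains \emph{why} Rule~$E2$ is irrelevant rather than merely confirming it class by class.
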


\begin{remark}
Note that in general, two distinct ternary square-free words that are \mbox{$M$\!-equivalent} need not be $1$-equivalent, for example the words $abcbabcacb$ and $bacabcbabc$. These two words form one of the \mbox{shortest-length} pairs of square-free ternary words that are \mbox{$M$\!-equivalent} but not $1$-equivalent (the only other such pair of words with length $10$ is $bcacbabcba$ and $cbabcbacab$).
\end{remark}

Next, we show that for an arbitrary ternary ordered alphabet, there are infinitely many pairs of square-free words that are $M$\!-equivalent. \!However, we first need the following notion and known result.

\begin{definition}
Suppose $\Sigma$ is an alphabet and $k$ is a positive integer. The \textit{k-spectrum} of a word $w\in\Sigma^*$ is the set $\{(u,|w|_u)\in \Sigma^*\times\mathbb{N}\,\,\,|\,\,\, |u|\le k\,\}$.
\end{definition}

The notion of $k$-spectrum was originally termed as $k$-deck in its introduction in \cite{MMSSS91}. Some examples of prominent works on $k$-spectrum are \cite{MMSSS91,jM00,DS03,aS05b}. 

\begin{theorem}\label{MorphPreserveSpec}\cite{wT16c}
Suppose $\Sigma,\Pi$ are alphabets, $\phi:\Sigma^*\rightarrow\Pi^*$ is a morphism, and $k$ is a positive integer. If two words $w,w'\in\Sigma^*$ have the same $k$-spectrum, then $\phi(w)$ and $\phi(w')$ have the same $k$-spectrum as well.
\end{theorem}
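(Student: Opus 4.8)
The plan is to show that for every word $u\in\Pi^*$ with $|u|\le k$ the quantity $|\phi(w)|_u$ is a fixed $\mathbb{N}$-linear combination---with coefficients depending only on $\phi$ and $u$---of the numbers $|w|_v$ with $v\in\Sigma^*$ and $|v|\le k$. Granting this, the theorem is immediate: two words having the same $k$-spectrum means precisely that $|w|_v=|w'|_v$ for all $v\in\Sigma^*$ with $|v|\le k$, so the two combinations coincide and $|\phi(w)|_u=|\phi(w')|_u$ for every $u\in\Pi^*$ with $|u|\le k$, i.e.\ $\phi(w)$ and $\phi(w')$ have the same $k$-spectrum.

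To build this combination, I would fix $u\in\Pi^*$ with $1\le |u|\le k$ (the case $u=\lambda$ being trivial since $|z|_\lambda=1$ for every $z$) and write $w=a_1a_2\cdots a_n$ with $a_i\in\Sigma$, so $\phi(w)=\phi(a_1)\phi(a_2)\cdots\phi(a_n)$. First I would decompose an arbitrary occurrence of $u$ as a subword of $\phi(w)$ according to the image blocks $\phi(a_1),\dots,\phi(a_n)$ that it meets: such an occurrence determines indices $i_1<i_2<\cdots<i_m$ (the blocks actually containing one of its letters) together with a factorization $u=u^{(1)}u^{(2)}\cdots u^{(m)}$ into nonempty factors, $u^{(j)}$ being the portion of $u$ read inside $\phi(a_{i_j})$; since each $u^{(j)}$ is nonempty, $m\le |u|\le k$.

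Next I would carry out the count. For a fixed factorization $u=u^{(1)}\cdots u^{(m)}$ into nonempty factors and a fixed tuple $i_1<\cdots<i_m$, the occurrences of $u$ realized in exactly this way are in bijection with the independent choices, one for each $j$, of an occurrence of $u^{(j)}$ as a subword of $\phi(a_{i_j})$, of which there are $\prod_{j=1}^m|\phi(a_{i_j})|_{u^{(j)}}$. Summing over all tuples $i_1<\cdots<i_m$ and grouping them by the word $v=a_{i_1}\cdots a_{i_m}\in\Sigma^m$ their letters spell---the number of tuples giving a prescribed $v=b_1\cdots b_m$ being, by definition, $|w|_v$---I would obtain the identity
\begin{equation*}
|\phi(w)|_u=\sum_{\substack{u=u^{(1)}u^{(2)}\cdots u^{(m)}\\ u^{(j)}\neq\lambda,\ 1\le m\le |u|}}\quad\sum_{v=b_1b_2\cdots b_m\in\Sigma^m}|w|_v\prod_{j=1}^m|\phi(b_j)|_{u^{(j)}},
\end{equation*}
which has the claimed shape, since each coefficient $\prod_{j=1}^m|\phi(b_j)|_{u^{(j)}}$ depends only on $\phi$ and $u$, and every $v$ that occurs satisfies $|v|=m\le k$.

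The hard part will be the bookkeeping rather than any idea: I must check carefully that every occurrence of $u$ in $\phi(w)$ arises from exactly one pair (factorization of $u$ into nonempty parts, block tuple), that the choices inside distinct blocks are genuinely independent so the product above is correct, and that letters with $\phi(b)=\lambda$ create no difficulty (the corresponding factors $|\phi(b)|_{u^{(j)}}$ vanish because $u^{(j)}\neq\lambda$). Turning the informal correspondence between occurrences of $u$ and (factorization, block tuple, per-block occurrence) data into a precise bijection is the one step demanding genuine care; once it is in place, the displayed formula and hence the theorem follow.
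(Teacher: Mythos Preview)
The paper does not supply its own proof of this theorem: it is quoted as a known result from \cite{wT16c} and used as a black box. Your argument is correct and is essentially the standard proof of this fact---expressing $|\phi(w)|_u$ as an $\mathbb{N}$-linear combination of the subword counts $|w|_v$ with $|v|\le|u|\le k$ via the block decomposition of $\phi(w)$. The bijection you worry about is routine: an occurrence of $u$ in $\phi(w)$ is a strictly increasing tuple of positions in $\phi(w)$, each position lies in a unique block $\phi(a_i)$, and reading off which blocks are hit and which positions lie in each hit block gives exactly the triple (ordered factorization of $u$ into nonempty pieces, increasing block-index tuple, per-block occurrence). Letters with $\phi(b)=\lambda$ are handled correctly, as you note, since the corresponding products vanish.
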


Suppose now $\Sigma$ is a ternary alphabet and $w,w'\in\Sigma^*$. By Remark~\ref{RemEntriesParMat}, one can see that if $w$ and $w'$ have the same 3-spectrum, then $w$ and $w'$ have the same Parikh matrix (i.e.~ $w$ and $w'$ are $M$\!-equivalent) with respect to any ordered alphabet with underlying alphabet $\Sigma$. Therefore, Theorem~\ref{MorphPreserveSpec} is desirably resourceful as it allows us to generate infinitely many pairs of ternary words that have the same $3$-spectrum via an arbitrary morphism. 

However, since our aim is to generate infinitely many pairs of $M$\!-equivalent ternary words that are square-free, the chosen morphism should preserve the square-freeness of the words throughout the (infinitely many) applications. Such morphisms are known as square-free morphisms and they are well-studied and presented in the literature (for example, \cite{jL57,mC82,jB84}).

\begin{example}\label{ExGenerateSFinfinite}
Suppose $\Sigma=\{a,b,c\}$ and consider the square-free morphism \mbox{$\phi:\Sigma^*\rightarrow\Sigma^*$} defined by:
\begin{equation*}
\begin{aligned}
\phi(a)=abcbacbcabcba; & &\phi(b)=bcacbacabcacb; & &\phi(c)=cabacbabcabac.
\end{aligned}
\end{equation*}
Let $w,w'\in\Sigma^*$ be words such that $w$ and $w'$ have the same 3-spectrum. For all integers $i>0$, define $w_i=\phi(w_{i-1})$ and $w'_i=\phi(w'_{i-1})$. Then for all integers $i\ge 0$, the words $w_i$ and $w'_i$ are both square-free and they have the same $3$-spectrum.
\end{example}

\begin{remark}
The morphism used in Example~\ref{ExGenerateSFinfinite} is in fact a uniform \mbox{square-free} morphism, meaning that under the morphism, all letters have images of the same length (in this case, it is 13). This morphism is due to Leech\cite{jL57}.
\end{remark}

Therefore, the final step is to find a pair of ternary square-free words having the same $3$-spectrum. Suprisingly, the shortest length of such ternary words is 18. The following exhausts all pairs of such words:
\begin{equation*}
\begin{aligned}
\{cabacbabcbacbcacba, abcacbcabcbabcabac\}\,\,\\
\{acbabcbacbcacbacab, bacabcacbcabcbabca\}\,\,\\
\{cbacabacbabcbacbca, acbcabcbabcabacabc\}\,\,\\
\{bcabacabcacbcabcba, abcbacbcacbacabacb\}\,\,\\
\{bcacbacabacbabcbac, cabcbabcabacabcacb\}\,\,\\
\{cbabcabacabcacbcab, bacbcacbacabacbabc\}.
\end{aligned}
\end{equation*}
By now, it is clear that the following result holds.

\begin{theorem}\label{sqfree3spec}
Suppose $\Sigma$ is an alphabet with $|\Sigma|=3$. There are infinitely many pairs of square-free words $w,w'\in\Sigma^*$ such that $w$ and $w'$ have the same $3$-spectrum. Furthermore, every such pair of words $w$ and $w'$ are $M$\!-equivalent with respect to any ordered alphabet with underlying alphabet $\Sigma$.
\end{theorem}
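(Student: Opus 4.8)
The plan is to bootstrap the infinitude from a single explicitly exhibited base pair by iterating the uniform square-free morphism $\phi$ of Example~\ref{ExGenerateSFinfinite}. Concretely, take $w_0=cabacbabcbacbcacba$ and $w_0'=abcacbcabcbabcabac$ (the first pair in the displayed list of length-$18$ words). One checks directly that both are square-free and that they have the same $3$-spectrum: for each word $u$ over $\Sigma$ with $|u|\le 3$ one simply compares the counts $|w_0|_u$ and $|w_0'|_u$, of which there are only finitely many. With this base pair in hand, define $w_i=\phi(w_{i-1})$ and $w_i'=\phi(w_{i-1}')$ for all $i\ge 1$, exactly as in Example~\ref{ExGenerateSFinfinite}.

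Next I would verify that each pair $(w_i,w_i')$ has the three required features. First, since $\phi$ is square-free and $w_0,w_0'$ are square-free, an easy induction shows that every $w_i$ and every $w_i'$ is square-free. Second, since $w_0$ and $w_0'$ have the same $3$-spectrum, repeated application of Theorem~\ref{MorphPreserveSpec} (with $k=3$) yields that $w_i$ and $w_i'$ have the same $3$-spectrum for every $i\ge 0$. Third, to see that we obtain infinitely many \emph{distinct} pairs, note that $\phi$ is uniform of length $13$, so $|w_i|=|w_i'|=13^{\,i}\cdot 18$; hence the pairs obtained for distinct values of $i$ are pairwise distinct. Moreover $\phi$ is injective---being a uniform morphism whose three letter-images are words of equal length beginning with the three distinct letters, it can be decoded block by block---so $w_0\neq w_0'$ forces $w_i=\phi^i(w_0)\neq\phi^i(w_0')=w_i'$ for every $i$, and each pair thus consists of two distinct words. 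This establishes the first assertion.

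For the ``furthermore'' clause, fix any ordering of $\Sigma$, say $\Sigma=\{a<b<c\}$. By Remark~\ref{RemEntriesParMat}, the matrix $\Psi_\Sigma(w)$ is completely determined by the six numbers $|w|_a$, $|w|_b$, $|w|_c$, $|w|_{ab}$, $|w|_{bc}$, $|w|_{abc}$, each of which is a count $|w|_u$ for some word $u$ with $|u|\le 3$. Consequently, if $w$ and $w'$ have the same $3$-spectrum, then $\Psi_\Sigma(w)=\Psi_\Sigma(w')$, i.e.\ $w\equiv_M w'$; and since the $3$-spectrum does not depend on how the letters are ordered, this holds with respect to every ordered alphabet whose underlying alphabet is $\Sigma$. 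Applying this to each pair $(w_i,w_i')$ completes the proof.

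The genuinely substantive inputs here are external rather than novel: that Leech's morphism $\phi$ is square-free (the content of \cite{jL57}, and exactly what powers the square-freeness induction) and the finite verification that the chosen length-$18$ base pair is square-free with a common $3$-spectrum. I expect that finite check, together with confirming the uniformity and injectivity of $\phi$, to be the only real ``work''; everything else is a formal consequence of Theorem~\ref{MorphPreserveSpec} and Remark~\ref{RemEntriesParMat}.
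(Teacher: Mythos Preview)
Your proposal is correct and follows essentially the same approach as the paper: start from an explicit length-$18$ base pair with common $3$-spectrum, iterate Leech's square-free morphism from Example~\ref{ExGenerateSFinfinite}, invoke Theorem~\ref{MorphPreserveSpec} to propagate the $3$-spectrum, and use Remark~\ref{RemEntriesParMat} to deduce $M$-equivalence under any ordering. You supply a couple of details the paper leaves implicit (the length growth $13^i\cdot 18$ to force distinct pairs, and injectivity of $\phi$ to ensure $w_i\neq w_i'$), but the strategy is identical.
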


The relation \textit{ME}-equivalence is strictly stronger than $M$-equivalence in the sense that any two \textit{ME}-equivalent words are $M$\!-equivalent but not vice versa. The shortest pairs of square-free ternary words that are \textit{ME}-equivalent are of length 15 and as exhausted below:

\begin{equation*}
\begin{aligned}
\{cbacabacbabcacb, abcacbcabcbacab\}\,\,\\
\{bcacbabcabacabc, bacabcbacbcacba\}.
\end{aligned}
\end{equation*}

Finally, we end this section with a conjecture on the class of square-free ternary words with respect to $M$-unambiguity.

\begin{definition}
Suppose $\Sigma$ is an ordered alphabet with $|\Sigma|=3$. A word $w\in\Sigma^*$ is \textit{square-free\,-$M$-unambiguous} iff $w$ is not $M$\!-equivalent to any other distinct square-free word. 
\end{definition}

The table in Appendix~\ref{A2} shows that up until length 60, the proportion of square-free\,-$M$-unambiguous words eventually decreases steadily. Thus, we conjecture the following:
\begin{conjecture}
Suppose $\Sigma$ is an ordered alphabet with $|\Sigma|\ge 3$. Then
$$\lim\limits_{k\rightarrow\infty}\frac{|\{w\in\Sigma^*:w\text{ is square-free\,-$M$-unambiguous and }|w|=k\}|}{|\{w\in\Sigma^*:w\text{ is square-free and }|w|=k\}|}=0.$$
\end{conjecture}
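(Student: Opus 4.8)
The plan is to bypass any detailed combinatorics on words and instead exploit a counting asymmetry: square-free words are exponentially many, whereas the set of attainable Parikh matrices is only polynomially large. Throughout I read the numerator in the only sensible way, namely as the set of \emph{square-free} words of length $k$ that are square-free-$M$-unambiguous; a word that is not itself square-free cannot reasonably be counted against the square-free denominator (and, as I note below, the injectivity argument genuinely needs this restriction). Pinning down this reading is the first thing I would state explicitly.

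The key observation is that the restriction of $\Psi_\Sigma$ to square-free-$M$-unambiguous words is \textbf{injective}. Indeed, if $w\neq w'$ are both square-free-$M$-unambiguous and $\Psi_\Sigma(w)=\Psi_\Sigma(w')$, then $w\equiv_M w'$, so $w'$ is a distinct square-free word that is $M$-equivalent to $w$, contradicting the unambiguity of $w$. Hence the number of square-free-$M$-unambiguous words of length $k$ is at most the number of distinct Parikh matrices realised by words of length $k$.

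Next I would bound that quantity by a polynomial in $k$. By Theorem~\ref{PropertiesParikhMat}, every off-diagonal entry of $\Psi_\Sigma(w)$ is a subword count $|w|_{a_i\cdots a_j}$, and for $|w|=k$ such a count satisfies $|w|_{a_i\cdots a_j}\le\binom{k}{j-i+1}\le k^{s}$, where $s=|\Sigma|$. Since an $(s+1)\times(s+1)$ Parikh matrix has exactly $\binom{s+1}{2}$ entries above the diagonal (for $s=3$ these are the six counts displayed in Remark~\ref{RemEntriesParMat}), the number of distinct Parikh matrices of length-$k$ words is at most $(k^{s}+1)^{\binom{s+1}{2}}$, a fixed polynomial $P_s(k)$. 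Combined with the injection, the numerator is at most $P_s(k)$.

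For the denominator I would invoke the classical fact (Thue; Brandenburg) that the number of square-free words of length $k$ over an alphabet of size at least three grows exponentially, say it is at least $c\,\beta^{k}$ for constants $c>0$ and $\beta>1$ (over a larger alphabet this is immediate, since every ternary square-free word remains square-free there). Therefore the ratio in the conjecture is at most $P_s(k)/(c\,\beta^{k})\to 0$; in fact it decays exponentially, so the conjecture holds in the stronger quantitative form that the proportion is $O(P_s(k)\,\beta^{-k})$, uniformly for all $s\ge 3$. There is no deep obstacle here: the only imported ingredient is the exponential lower bound on square-free words, and the entire substance is the injectivity observation, which converts $M$-unambiguity into a polynomial cardinality bound. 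The single point demanding care before drafting the formal proof is fixing the interpretation of the numerator, since the injection fails if words that are not square-free are admitted; once that is settled, the estimate is routine.
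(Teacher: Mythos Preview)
The paper does not prove this statement: it is explicitly labeled a \emph{Conjecture} and is supported only by the numerical table in Appendix~\ref{A2}. There is no proof in the paper to compare against.

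Your argument, however, is a genuine proof and appears to be correct. The injectivity step is exactly right: if $w\neq w'$ are both square-free and both square-free-$M$-unambiguous with $\Psi_\Sigma(w)=\Psi_\Sigma(w')$, then $w'$ is a distinct square-free word $M$-equivalent to $w$, contradicting the unambiguity of $w$. The polynomial bound on the number of Parikh matrices of length-$k$ words follows immediately from Theorem~\ref{PropertiesParikhMat} as you indicate, and the exponential lower bound on square-free words over a ternary (hence any larger) alphabet is classical (Brandenburg, 1983). Hence the ratio is bounded by a polynomial over an exponential and tends to zero. You have in fact settled the conjecture, not merely reproduced an existing proof.

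Your care about the reading of the numerator is warranted and matches the authors' intent: the counts in Appendix~\ref{A2} (e.g.\ $\mathrm{Un}(2)=4$, corresponding to $ab,ba,bc,cb$ while $ac,ca$ are excluded) confirm that only square-free words are being tallied. As you note, the injectivity argument genuinely requires this restriction, so this clarification should be made explicit in a formal write-up.
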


\section{Conclusion}
The introduction of the notion of general prints of a word opens up a potential direction in studying repetition in words. It is interesting to see what other canonical ways of decompositions can be proposed to attain a general print of a word.

The behavior of the function $\zeta_r$ is intriguing. The monotonicity of the \mbox{function} is suggested by Appendix~\ref{A1} but it is yet to be conclusively \mbox{determined}. Some natural directions of research concerning the function $\zeta_r$ would be:
\begin{itemize}[leftmargin=2em]
\item to determine (for feasible range of $n$) the values $\zeta_r(n)$ for higher orders $r$, or estimate the values $\zeta_r(n)$ within bounds as tight as possible;
\item to study the growth rate of $\zeta_r$.
\end{itemize}

Finally, Lemma~\ref{ConditionforSquareFreeClass} implies that every square-free word $w\in\{a<b<c\}^*$ with $|w|_b>4$ is $M$\!-ambiguous. That is to say, there exists an upper bound on the length of $M$\!-unambiguous square-free words for the ternary alphabet (precisely, the upper bound is 7 as shown in the list before Theorem~\ref{TheoForConcl}). For our future work, we aim to generalize Lemma~\ref{ConditionforSquareFreeClass} to obtain the corresponding upper bounds for larger alphabets.

\section*{Acknowledgement}
The first and third authors gratefully acknowledge support for this research by a Research University Grant No.~1001/PMATHS/8011019 of Universiti Sains Malaysia.

\bibliographystyle{abbrv}

\appendix
\section{Values of $\zeta_2(n)$ for every integer \mbox{$1\le n\le 24$}}\label{A1}
\begin{tabular}{l l l}
N($n$)\!\!\!&=&\!\!\!$|\,\{w\in\Sigma^* \text{ with } |w|=n \text{ and } l_R(w)=\zeta_2(n)\}\,|$\\
P($n$)\!\!\!&=&\!\!\!$\dfrac{\text{Nu}(n)}{|\,\{w\in\Sigma^* \text{ with } |w|=n\}\,|}\times 100\%$ (rounded to two decimal places)
\end{tabular}
\\
\begin{table}[htbp]

    \begin{minipage}{.5\linewidth}
      
      \centering
        \begin{tabular}{|r|r|r|r|}
        \hline
        $n$ & \multicolumn{1}{c|}{$\zeta_2(n)$} & \multicolumn{1}{c|}{\text{N($n$)}} & \multicolumn{1}{c|}{P($n$)} \\ \hline
        1  & 0       & 2       & 100.00 \\ \hline
        2  & 1       & 2       & 50.00  \\ \hline
        3  & 1       & 6       & 75.00  \\ \hline
        4  & 1       & 16      & 100.00  \\ \hline
        5  & 2       & 8       & 25.00  \\ \hline
        6  & 2       & 24      & 37.50  \\ \hline
        7  & 3       & 2       & 1.56  \\ \hline
        8  & 3       & 16      & 6.25  \\ \hline
        9  & 3       & 64      & 12.50  \\ \hline
        10 & 3       & 178     & 17.38  \\ \hline
        11 & 4       & 10      & 0.49  \\ \hline
        12 & 4       & 48      & 1.17  \\ \hline
        13 & 4       & 180     & 2.20  \\ \hline
        14 & 4    & 552     & 3.37  \\ \hline
        15 & 4    & 1642    & 5.01  \\ \hline
        \end{tabular}
    \end{minipage}%
    \begin{minipage}{.5\linewidth}
      \centering
        
        \begin{tabular}{|r|r|r|r|}
        \hline
        $n$ & \multicolumn{1}{c|}{$\zeta_2(n)$} & \multicolumn{1}{c|}{\text{N($n$)}} & \multicolumn{1}{c|}{P($n$)} \\  \hline

        16 & 4    & 4410    & 6.73  \\ \hline
        17 & 4    & 11286   & 8.61  \\ \hline
        18 & 5    & 24      & 0.01  \\ \hline
        19 & 5    & 266     & 0.05  \\ \hline
        20 & 5    & 1314    & 0.13  \\ \hline
        21 & 5    & 4996    & 0.24  \\ \hline
        22 & 5    & 16134   & 0.38  \\ \hline
        23 & 5    & 47214   & 0.56  \\ \hline
        24 & 5    & 128846  & 0.77  \\ \hline
        25 & 5    & 333068  & 0.99  \\ \hline
        26 & 5    & 830620  & 1.24  \\ \hline
        27 & 5    & 2015582 & 1.50  \\ \hline
        28 & 5    & 4794990 & 1.79  \\ \hline
        29 & 5    & 11225526 & 2.09  \\ \hline
        30 & 6    & 18      & 0.00  \\ \hline
        \end{tabular}
    \end{minipage} 
\end{table}

\newpage

\section{Proportion of Square-free-\textit{M}-unambiguous Words}\label{A2}

\begin{tabular}{l l l}
Sq($k$) &= &Number of square-free words of length $k$\\
Un($k$) &= &Number of square-free\,-$M$-unambiguous words of length $k$\\
Pr($k$) &= &$\dfrac{\text{Sq}(k)}{\text{Un}(k)}\times 100\%$ (rounded to one decimal place)
\end{tabular}

\begin{table}[htbp]

    \begin{minipage}{.5\linewidth}
      
      \centering
        \begin{tabular}{|r|r|r|r|}
        \hline
        $k$ & \multicolumn{1}{c|}{\text{Sq($k$)}} & \multicolumn{1}{c|}{\text{Un($k$)}} & \multicolumn{1}{c|}{Pr($k$)} \\ \hline
        1  & 3        & 3       & 100.0 \\ \hline
        2  & 6        & 4       & 66.7  \\ \hline
        3  & 12       & 8       & 66.7  \\ \hline
        4  & 18       & 8       & 44.4  \\ \hline
        5  & 30       & 18      & 60.0  \\ \hline
        6  & 42       & 26      & 61.9  \\ \hline
        7  & 60       & 42      & 70.0  \\ \hline
        8  & 78       & 60      & 76.9  \\ \hline
        9  & 108      & 82      & 75.9  \\ \hline
        10 & 144      & 114     & 79.1  \\ \hline
        11 & 204      & 162     & 79.4  \\ \hline
        12 & 264      & 196     & 74.2  \\ \hline
        13 & 342      & 274     & 80.1  \\ \hline
        14 & 456      & 348     & 76.3  \\ \hline
        15 & 618      & 470     & 76.1  \\ \hline
        16 & 798      & 574     & 71.9  \\ \hline
        17 & 1044     & 780     & 74.7  \\ \hline
        18 & 1392     & 1004    & 72.1  \\ \hline
        19 & 1830     & 1296    & 70.8  \\ \hline
        20 & 2388     & 1650    & 69.1  \\ \hline
        21 & 3180     & 2232    & 70.1  \\ \hline
        22 & 4146     & 2848    & 68.7  \\ \hline
        23 & 5418     & 3670    & 67.7  \\ \hline
        24 & 7032     & 4818    & 68.5  \\ \hline
        25 & 9198     & 6242    & 67.9  \\ \hline
        26 & 11892    & 8024    & 67.5  \\ \hline
        27 & 15486    & 10308   & 66.5  \\ \hline
        28 & 20220    & 13222   & 65.4  \\ \hline
        29 & 26424    & 16850   & 63.8  \\ \hline
        30 & 34422    & 21578   & 62.7  \\ \hline
        \end{tabular}
    \end{minipage}%
    \begin{minipage}{.5\linewidth}
      \centering
        
        \begin{tabular}{|r|r|r|r|}
        \hline
        $k$ & \multicolumn{1}{c|}{\text{Sq($k$)}} & \multicolumn{1}{c|}{\text{Un($k$)}} & \multicolumn{1}{c|}{Pr($k$)} \\ \hline
        31 & 44862    & 27128   & 60.5  \\ \hline
        32 & 58446    & 33944   & 58.1  \\ \hline
        33 & 76122    & 42676   & 56.1  \\ \hline
        34 & 99276    & 53152   & 53.5  \\ \hline
        35 & 129516   & 66494   & 51.3  \\ \hline
        36 & 168546   & 82480   & 48.9  \\ \hline
        37 & 219516   & 102056  & 46.5  \\ \hline
        38 & 285750   & 125072  & 43.8  \\ \hline
        39 & 377204   & 153434  & 40.7  \\ \hline
        40 & 484446   & 186752  & 38.5  \\ \hline
        41 & 630666   & 226264  & 35.9  \\ \hline
        42 & 821154   & 272408  & 33.2  \\ \hline
        43 & 1069512  & 327468  & 30.6  \\ \hline
        44 & 1392270  & 390042  & 28.0  \\ \hline
        45 & 1812876  & 460248  & 25.4  \\ \hline
        46 & 2359710  & 541526  & 22.9  \\ \hline
        47 & 3072486  & 634254  & 20.6  \\ \hline
        48 & 4000002  & 741450  & 18.5  \\ \hline
        49 & 5207706  & 856702  & 16.5  \\ \hline
        50 & 6778926  & 989104  & 14.6  \\ \hline
        51 & 8824956  & 1147932 & 13.0  \\ \hline
        52 & 11488392 & 1313758 & 11.4  \\ \hline
        53 & 14956584 & 1497312 & 10.0  \\ \hline
        54 & 19470384 & 1711700 & 8.8   \\ \hline
        55 & 25346550 & 1953100 & 7.7   \\ \hline
        56 & 32996442 & 2213664 & 6.7   \\ \hline
        57 & 42957300 & 2485178 & 5.8   \\ \hline
        58 & 55921896 & 2834244 & 5.1   \\ \hline
        59 & 72798942 & 3192170 & 4.4   \\ \hline
        60 & 94766136 & 3571018 & 3.8   \\ \hline
        \end{tabular}
    \end{minipage} 
\end{table}

\end{document}